%
%
%

\documentclass[reqno,11pt]{amsart}

\pdfoutput=1

\usepackage{amsmath}
\usepackage{amsfonts}
\usepackage{amssymb}
\usepackage{enumerate}
\usepackage{amstext}
\usepackage{amsbsy}
\usepackage{amsopn}
\usepackage{bbm,amsthm}
\usepackage{amscd}
\usepackage{dsfont}
\usepackage{mathrsfs}
\usepackage[pdftex]{color}
\usepackage{amsxtra}
\usepackage{upref}
\usepackage{epstopdf}
\usepackage{graphicx,color}
\usepackage{hyperref}
\usepackage{longtable}
\usepackage{graphicx}
\usepackage[english]{babel}
\usepackage{todonotes}
\usepackage{cancel}

\textwidth=6.0in
\addtolength{\oddsidemargin}{-.40in}
\addtolength{\evensidemargin}{-.40in}

\DeclareFontFamily{OML}{rsfs}{\skewchar\font'177}
\DeclareFontShape{OML}{rsfs}{m}{n}{ <5> <6> rsfs5 <7> <8> <9> rsfs7
  <10> <10.95> <12> <14.4> <17.28> <20.74> <24.88> rsfs10 }{}
\DeclareMathAlphabet{\mathfs}{OML}{rsfs}{m}{n}

\newtheorem{theorem}{Theorem}
\newtheorem{lemma}[theorem]{Lemma}
\newtheorem{proposition}[theorem]{Proposition}

\theoremstyle{definition}

\newtheorem{conjecture}[theorem]{Conjecture}

\theoremstyle{remark}
\newtheorem{remark}[theorem]{\bf Remark}

\numberwithin{equation}{section}
\numberwithin{theorem}{section}

\newcommand{\intav}[1]{\mathchoice {\mathop{\vrule width 6pt height 3 pt depth  -2.5pt
\kern -8pt \intop}\nolimits_{\kern -6pt#1}} {\mathop{\vrule width
5pt height 3  pt depth -2.6pt \kern -6pt \intop}\nolimits_{#1}}
{\mathop{\vrule width 5pt height 3 pt depth -2.6pt \kern -6pt
\intop}\nolimits_{#1}} {\mathop{\vrule width 5pt height 3 pt depth
-2.6pt \kern -6pt \intop}\nolimits_{#1}}}

\newcommand{\intavl}[1]{\mathchoice {\mathop{\vrule width 6pt height 3 pt depth  -2.5pt
\kern -8pt \intop}\limits_{\kern -6pt#1}} {\mathop{\vrule width 5pt
height 3  pt depth -2.6pt \kern -6pt \intop}\nolimits_{#1}}
{\mathop{\vrule width 5pt height 3 pt depth -2.6pt \kern -6pt
\intop}\nolimits_{#1}} {\mathop{\vrule width 5pt height 3 pt depth
-2.6pt \kern -6pt \intop}\nolimits_{#1}}}



\newcommand{\ve}{\varepsilon}

\newcommand{\R}{\mathbb{R}}

\renewcommand{\P}[1]{{\mathbb{P}}\left[{#1}\right]}

\newcommand{\EE}[2]{{\mathbb{E}}\left[{#1}|{#2}\right]}

\DeclareMathOperator{\diam}{diam}

\begin{document}

\title[Pólya urns on hypergraphs]{Pólya urns on hypergraphs}

\author{Pedro Alves, Matheus Barros, Yuri Lima}
\date{\today}
\keywords{}
\thanks{Corresponding author: Yuri Lima}

\address{Pedro Alves, Instituto de Matemática e Estatística, Universidade de São Paulo (USP), Rua do Matão, 1010, Cidade Universitária, 05508-090. São Paulo -- SP, Brazil}
\email{pedroalvesqueiroz.qa@gmail.com}
\address{Matheus Barros, IMPA, Estrada Dona Castorina 110, CEP 22460-320, Rio de Janeiro, Brazil}
\email{matheusbarros201213@gmail.com}
\address{Yuri Lima, Instituto de Matemática e Estatística, Universidade de São Paulo (USP), Rua do Matão, 1010, Cidade Universitária, 05508-090. São Paulo -- SP, Brazil}
\email{yurilima@gmail.com}

\begin{abstract}
We study Pólya urns on hypergraphs and prove that, when the incidence
matrix of the hypergraph is injective, there exists
a point $v=v(H)$ such that the random process
converges to $v$ almost surely. We also provide a partial result when
the incidence matrix is not injective.
\end{abstract}

\maketitle

\section{Introduction}\label{sec:introduction}

In 1923, George Pólya introduced a simple urn model that has attracted attention
to probabilists. Nowadays called {\em classical Pólya urn} or simply {\em Pólya urn},
the simplicity of this random process with reinforcement allows for many variations
and adaptations to more complicated settings, some of which model concrete situations
in areas such as neuroscience, population dynamics, and social networks.

We are interested in Pólya urns on hypergraphs, as introduced in \cite[Section 9.2]{BBCL-15}.
Consider a hypergraph $H=(V,E)$ with
$V=[m]=\{1,2,\ldots,m\}$ and $|E|=N$, where every vertex $v\in V$ belongs to at least
one hyperedge $I\in E$. Place a bin at each vertex, and assume that on vertex $i$ the
bin contains initially $B_i(0)\geq 1$ balls. We consider the following random process of adding
$N$ balls to the bins at each step: if the numbers of balls after step $n-1$ are $B_1(n-1),\ldots,B_m(n-1)$,
step $n$ consists of adding, for each hyperedge $I\in E$, one ball to one of its vertices
according to the following probability:
$$
\P{i\text{ is chosen among }I\text{ at step }n}=\dfrac{B_i(n-1)}{\sum\limits_{j\in I}B_j(n-1)}\,\cdot
$$
We study the asymptotic behavior of the proportion of balls in the
bins of $H$, as the number of steps grows. More specifically, let $N_0=\sum_{i=1}^m B_i(0)$
denote the initial total number of balls, let $x_i(n)=\tfrac{B_i(n)}{N_0+nN}$, $i\in[m]$,
be the proportion of balls at vertex $i$ after step $n$, and let $x(n)=(x_1(n),\ldots,x_m(n))$.
We characterize the limiting behavior of $x(n)$ using a combinatorial information of the hypergraph,
given by its {\em incidence matrix} $I(H)$, see the definition in Section \ref{ss.notation}.
Let $\Gamma=\{(x_1,\ldots,x_m)\in\R^m:x_1+\cdots+x_m=0\}$.

\begin{theorem}\label{Thm-injective}
Let $H$ be a finite hypergraph. If the restriction of $I(H)$ to $\Gamma$ is injective, 
then there exists a point $v=v(H)$ such that $x(n)$ converges to $v$ almost surely.
\end{theorem}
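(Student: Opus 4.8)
The plan is to recognize $(x(n))_{n\ge0}$ as a stochastic approximation process and to analyze its associated mean-field ODE. First I would write, for each vertex $i$, $B_i(n)=B_i(n-1)+\Delta_i(n)$ with $\Delta_i(n)=\sum_{I\ni i}\xi_I^i(n)$, where $\xi_I^i(n)$ is the indicator that $i$ is the vertex chosen in hyperedge $I$ at step $n$. Since exactly one vertex is picked per hyperedge and $B_j(n-1)=T_{n-1}x_j(n-1)$, one gets $\E{\Delta_i(n)\mid\mc F_{n-1}}=\sum_{I\ni i}x_i(n-1)/s_I(x(n-1))$, where $s_I(x)=\sum_{j\in I}x_j$ and $\mc F_{n-1}$ is the natural filtration. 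Dividing by $T_n=N_0+nN$ and rearranging yields
$$x(n)-x(n-1)=\tfrac1{T_n}\bigl(F(x(n-1))+U_n\bigr),\qquad F_i(x)=\sum_{I\ni i}\frac{x_i}{s_I(x)}-Nx_i,$$
where $U_n=\Delta(n)-\E{\Delta(n)\mid\mc F_{n-1}}$ is a martingale difference bounded by the vertex degrees and the gains $1/T_n\sim 1/(Nn)$ satisfy $\sum 1/T_n=\infty$ and $\sum 1/T_n^2<\infty$. Because $B_i(n)\ge B_i(0)\ge1$, the iterates stay in the (compact) open simplex for all finite $n$, so the limiting behaviour of $x(n)$ is governed by the flow of $\dot x=F(x)$ on the simplex $\{x\ge 0,\ \sum_i x_i=1\}$, which is invariant as $\sum_i F_i\equiv0$.

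The decisive structural observation I would make is that $\dot x=F(x)$ is a replicator equation driven by the potential $V(x)=\sum_{I\in E}\log s_I(x)$. Indeed $F_i(x)=x_i(\partial_i V(x)-N)$, and on the simplex $\sum_j x_j\partial_j V(x)=\sum_I 1=N$, so $F_i(x)=x_i(\partial_i V(x)-\bar f(x))$ with mean fitness $\bar f(x)=\sum_j x_j\partial_j V(x)$. Consequently $V$ is a strict Lyapunov function: along the flow $\dot V=\sum_i x_i(\partial_i V)^2-\bar f^2=\mathrm{Var}_x(\partial V)\ge 0$, with equality precisely at the equilibria of $F$. This is exactly where the injectivity hypothesis enters. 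The Hessian of $V$ is $\nabla^2 V(x)=-I(H)^\top D(x)\,I(H)$ with $D(x)=\diag(s_I(x)^{-2})\succ 0$, so for $w\in\Gamma$ one has $w^\top\nabla^2 V(x)\,w=-\sum_I s_I(x)^{-2}(I(H)w)_I^2\le 0$, vanishing iff $I(H)w=0$. Thus injectivity of $I(H)|_\Gamma$ is \emph{equivalent} to $V$ being strictly concave on the simplex, which forces a \emph{unique} maximizer $v=v(H)$ (the barrier $V\to-\infty$ as any $s_I\to0$, available since every vertex lies in some hyperedge, keeps $v$ in the region where $F$ is smooth). Strict concavity also makes $v$ the unique asymptotically stable equilibrium, every other equilibrium lying on a proper face and being transversally unstable, since from such a face one can increase $V$ by pushing mass into the interior.

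Finally I would invoke the ODE method: almost surely the limit set of $x(n)$ is a connected internally chain transitive set of the flow, and since $V$ is a strict Lyapunov function on which $V$ takes discrete values over the equilibrium set, every such set is contained in the equilibria with $V$ constant on it. It remains to exclude all equilibria other than $v$; for this I would apply non-convergence-to-unstable-equilibria results, in the spirit of Pemantle's and Benaïm's work, which require that the martingale noise $U_n$ carry a uniformly non-degenerate component along each unstable direction near every such equilibrium. This non-degeneracy should hold because, near any proper face, the bins whose proportions are being driven to zero still receive balls with positive probability, exciting exactly the transverse directions in which $V$ increases. Combining the reduction to equilibria with the exclusion of the unstable ones gives $x(n)\to v$ almost surely. \textbf{The main obstacle} I anticipate is precisely this last step: verifying the noise non-degeneracy and adapting the standard stochastic-approximation hypotheses to the case where the target $v$ itself lies on the boundary of the simplex (as happens, for instance, when some vertex is forced to accumulate nearly all the mass), where the smoothness and coercivity estimates near $\partial\Gamma$ must be handled with care.
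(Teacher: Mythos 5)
Your outline follows essentially the same route as the paper: cast $x(n)$ as a stochastic approximation algorithm with the replicator-type field $F_i(x)=x_i(\partial_i V(x)-N)$, exhibit a Lyapunov function (your $V=\sum_{I}\log s_I$ agrees, up to the term $-\sum_i v_i$ which is constant on the simplex and a factor $1/N$, with the paper's $L$), invoke the limit set theorem (Theorem \ref{thm:limit set theorem}) to confine the limit set to the equilibria, and then exclude the unstable ones. Two points of comparison. First, your uniqueness argument is genuinely different and arguably more direct: you obtain a unique non-unstable equilibrium from strict concavity of $V$ along $\Gamma$, via the Hessian identity $\nabla^2V=-I(H)^\top D(x)\,I(H)$ (the same identity the paper proves in Lemma \ref{lemma-neg-eig}, but for a different purpose), whereas the paper deduces ``at most one non-unstable equilibrium'' from a second Lyapunov function $P(v)=\sum_i w_i\log v_i$ attached to each non-unstable $w$ (Lemma \ref{lemma-non-unstable}) and ``at least one'' from the non-convergence lemma. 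Note that injectivity on $\Gamma$ also makes each $\Lambda_S$ a singleton or empty (Lemma \ref{lemma.equil.face}), so $\Lambda$ is finite and the connected limit set is automatically one point; this is why your worry about the limit $v$ sitting on $\partial\Delta^{m-1}$ causes no difficulty and requires no boundary coercivity estimates. Second, the step you flag as the main obstacle is indeed where the real work lies, but the paper does not route it through a general noise non-degeneracy hypothesis in the Pemantle style: Lemma \ref{Lem: non-convergence} (proved in the Appendix) is a bespoke drift argument showing that near an equilibrium with $w_i=0$ and $\tfrac{\partial L}{\partial v_i}(w)>0$, a coupling with independent Bernoulli variables and a Chernoff bound force $\mathbb{E}[\log x_i((1+\delta)n)]$ to increase by a fixed amount over each geometric time block, contradicting $\log x_i\le 0$. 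As written, your proposal leaves this exclusion step resting on an unverified non-degeneracy claim, so it is a correct plan with the same architecture but with its hardest lemma still to be supplied.
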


Observe that $v=v(H)$ is a deterministic point that only depends on $H$ and not on the 
random process. Our impression, based on many examples, is that the above injectivity
condition usually holds, unless $H$ exhibits some symmetry on its structure.

This result is more general than the one stated in the abstract, since it only requires the restriction 
$I(H)\restriction_\Gamma$ to be injective. 
We also provide a partial answer when the restriction
is not injective. Let $\Delta^{m-1}=\{(x_1,\ldots,x_m)\in\R^m:x_i\geq 0 \text{ and }x_1+\cdots+x_m=1\}$
be the $(m-1)$--th dimensional simplex. Let $K={\rm ker}(I(H)\restriction_{\Gamma})$.

\begin{theorem}\label{thm-not-injective}
Let $H$ be a finite hypergraph. There exists a closed connected subset
$\mathscr{J} = \mathscr{J}(H)$ of an affine subspace of $\R^m$ parallel to $K$ such that
the limit set of $x(n)$ is contained in $\mathscr J$ almost surely. Furthermore, conditioned on the
event that the limit set of $x(n)$ does not intersect 
$\partial\Delta^{m-1}$, $x(n)$ converges to a point of $\mathscr{J}$ almost surely.
\end{theorem}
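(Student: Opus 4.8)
The plan is to realize $x(n)$ as a stochastic approximation on the simplex whose mean field is a replicator field admitting a concave Lyapunov function, and to take for $\mathscr J$ the maximizer set of that function. Writing $T_n = N_0+nN$ and letting $\xi_i(n)$ be the number of balls added at vertex $i$ during step $n$, a direct computation gives
\begin{equation}
x_i(n)-x_i(n-1)=\frac{1}{T_n}\bigl(\xi_i(n)-N\,x_i(n-1)\bigr),
\end{equation}
with $\E{\xi_i(n)\mid\mc F_{n-1}}=\sum_{I\ni i}x_i/s_I$, where $s_I=s_I(x)=\sum_{j\in I}x_j$ and $\mc F_{n-1}$ is the natural filtration. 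Hence the mean field is $F(x)=(F_i(x))_i$ with $F_i(x)=x_i\bigl(\sum_{I\ni i}1/s_I-N\bigr)$, and the noise $\xi(n)-\E{\xi(n)\mid\mc F_{n-1}}$ is bounded. Since $\sum_i F_i\equiv 0$ on $\Delta^{m-1}$, the field $F$ is tangent to the simplex and defines a replicator dynamics there, so by Benaïm's theory of stochastic approximation the limit set of $x(n)$ is almost surely a compact connected internally chain transitive set for the flow of $F$.

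The crucial point is that $F$ is a gradient replicator field for the potential $W(x)=\sum_{I\in E}\log s_I(x)$. Indeed $\partial_iW=\sum_{I\ni i}1/s_I$ and $\langle x,\nabla W\rangle\equiv N$ on $\Delta^{m-1}$, so $F_i=x_i(\partial_iW-\langle x,\nabla W\rangle)$, and along the flow $\tfrac{d}{dt}W=\sum_i x_i(\partial_iW)^2-\langle x,\nabla W\rangle^2\ge 0$ by Cauchy–Schwarz, with equality exactly on $\{F=0\}$ (equality forces $\partial_iW$ constant on the support, and $\langle x,\nabla W\rangle\equiv N$ then forces that constant to equal $N$, i.e.\ $F=0$). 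Thus $W$ is a strict Lyapunov function, finite precisely where all $s_I>0$, and the limit set lies in $\{F=0\}$ with $W$ constant on it. Moreover $W$ is concave, each $s_I$ being linear and $\log$ concave; its Hessian is $-\sum_I s_I^{-2}a_Ia_I^{\!\top}$, where $a_I\in\R^m$ is the indicator vector of $I$ and $\langle a_I,v\rangle=(I(H)v)_I$. Hence on the interior the Hessian has kernel exactly $\ker(I(H))\cap\Gamma=\ker\bigl(I(H)\restriction_\Gamma\bigr)$.

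I would then define $\mathscr J=\mathscr J(H)$ to be the set of maximizers of $W$ on $\Delta^{m-1}$. By concavity this set is convex, hence closed and connected, and by the Hessian computation it lies in an affine subspace parallel to $\ker\bigl(I(H)\restriction_\Gamma\bigr)$. To show the limit set is contained in $\mathscr J$ I would argue that every \emph{stable} equilibrium of $F$ is a global maximizer: the Karush–Kuhn–Tucker conditions for $\max_{\Delta^{m-1}}W$ give $\partial_iW=N$ on the support and $\partial_jW\le N$ off it, whereas any sub-maximal face equilibrium has a coordinate $j$ with $x_j=0$ and $\partial_jW>N$, so that $\partial F_j/\partial x_j=\partial_jW-N>0$ there and the equilibrium is linearly unstable. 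Since the noise excites the direction $e_j$ (the process keeps every $x_j(n)>0$), the standard non-convergence-to-unstable-equilibria results for stochastic approximation rule these out almost surely, leaving the limit set inside $\mathscr J$.

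For the second statement, assume the limit set avoids $\partial\Delta^{m-1}$ almost surely, so it lies in $\mathscr J^\circ=\mathscr J\cap\operatorname{int}(\Delta^{m-1})$, where $W$ is smooth and strictly concave transverse to $K:=\ker\bigl(I(H)\restriction_\Gamma\bigr)$; thus $\mathscr J^\circ$ is a normally hyperbolic attracting manifold of equilibria. On $\mathscr J^\circ$ the drift $F$ vanishes, so the flow selects no limit, and I would instead fix a projection $P$ onto $K$ and analyze $Y(n)=P\,x(n)$, which satisfies $Y(n)-Y(n-1)=T_n^{-1}P\bigl(\xi(n)-\E{\xi(n)\mid\mc F_{n-1}}\bigr)+T_n^{-1}PF(x(n-1))$. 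The martingale part has increments of order $T_n^{-1}$ with $\sum_nT_n^{-2}<\infty$, hence converges almost surely; for the drift part, transverse strong concavity gives $\max W-W(x)\gtrsim\operatorname{dist}(x,\mathscr J)^2$, which together with the almost-sure convergence of $W(x(n))$ and the Lyapunov increment estimate yields $\sum_nT_n^{-1}\operatorname{dist}(x(n),\mathscr J)<\infty$, and since $\|PF(x)\|\lesssim\operatorname{dist}(x,\mathscr J)$ near $\mathscr J^\circ$ the drift is summable and $Y(n)$ converges. As the transverse component simultaneously contracts onto $\mathscr J$, the process $x(n)$ converges to a single point. The hard part is exactly this last step: tangentially to $\mathscr J^\circ$ there is no restoring drift, so convergence to a point is a fluctuation estimate rather than a dynamical fact, and the summability of the tangential drift hinges on coupling the quadratic transverse decay of $W$ with the $1/n$ step size—an estimate for which the boundary-avoidance hypothesis supplies the uniform transverse strict concavity (normal hyperbolicity) that is unavailable in general.
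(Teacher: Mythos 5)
For the first assertion your route is essentially the paper's: the same stochastic-approximation decomposition, the same Lyapunov function (your $W=\sum_{I}\log s_I$ equals $N(L+1)$ on the simplex), Benaïm's limit-set theorem, and elimination of unstable equilibria. Your identification of $\mathscr J$ with the maximizer set of the concave $W$ on $\Delta^{m-1}$ -- so that non-unstable $=$ KKT $=$ global maximum, and $\mathscr J$ is automatically convex, connected and contained in a single translate of $K$ -- is a clean deterministic shortcut; the paper reaches the same conclusion via a second Lyapunov function $P(v)=\sum_i w_i\log v_i$ (Lemma \ref{lemma-non-unstable}) and an almost-sure argument. One point you pass over too quickly: since $\dim K$ may be positive, the unstable equilibria form continua $\Delta_S\cap(v+K)$, and the ``standard'' non-convergence theorems (stated for isolated linearly unstable equilibria) do not apply off the shelf; the paper has to prove $\mathbb{P}\bigl[L\{x(n)\}\cap(v+K)\neq\emptyset\bigr]=0$ by hand in the appendix. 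That is a real step, though your mechanism (the noise keeps exciting the expanding coordinate) is the right one.

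The genuine gap is in the second assertion. You project onto $K$ by a fixed linear map $P$ and need $\sum_n T_n^{-1}\|PF(x(n))\|<\infty$, deduced from $\|PF(x)\|\lesssim d(x,\mathscr J)$ together with the claim $\sum_n T_n^{-1}d(x(n),\mathscr J)<\infty$. But the Lyapunov-increment argument combined with $\langle\nabla W,F\rangle\gtrsim d(\cdot,\mathscr J)^2$ only yields $\sum_n T_n^{-1}d(x(n),\mathscr J)^{2}<\infty$, and since $\sum_n T_n^{-1}=\infty$ this does not imply summability of the first power -- Cauchy--Schwarz goes the wrong way, and heuristically $d(x(n),\mathscr J)\asymp n^{-1/2}$ from the noise, so the desired sum is borderline and requires a quantitative transverse contraction estimate, not the energy identity. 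Moreover $PF$ genuinely is of first order in $d(x,\mathscr J)$: near $v\in\Lambda_{[m]}$ one has $F(x)=DF(v)(x-v)+O(d^2)$ with $DF(v)=DB$, $D=\diag(v_i)$, so the image of $DF(v)$ is $D\cdot(\ker I(H))^{\perp}$, which in general is not contained in any fixed complement of $K$; a fixed linear projection therefore sees a tangential drift of order $d(x(n),\mathscr J)$ that you cannot sum. The paper circumvents exactly this by replacing $P$ with the projection $\pi$ along an $F$-invariant normally hyperbolic foliation transverse to $\Lambda_{[m]}$: since $\pi\circ\Phi_T=\pi$, the tangential displacement over a window $[t,t+T]$ is controlled not by the drift but by the shadowing error $d\bigl(X(t+T),\Phi_T(X(t))\bigr)\le e^{-t/4}$ of Theorem \ref{theorem-random-versus-deterministic}, which is geometrically summable along the times $t+kT$ (Lemma \ref{lemma-iteration}). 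To repair your argument you would first need to show that $d(x(n),\mathscr J)$ decays exponentially in $\tau_n$, which is precisely what the invariant-manifold machinery delivers; as written, the convergence of $Y(n)=Px(n)$ is not established.
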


In particular, if the limit set of $x(n)$ does not intersect $\partial\Delta^{m-1}$ almost surely,
then $x(n)$ converges to a point of $\mathscr{J}$ almost surely. We observe that it is possible
that $\lim x(n)$ exists and belongs to $\partial\Delta^{m-1}$ with positive probability.
For instance, if $H$ has a leaf and $I(H)$ is injective, then the point $v(H)$ given by Theorem 
\ref{Thm-injective} belongs to $\partial\Delta^{m-1}$, see Lemma \ref{lemma-leaf}. However, we could not find an example
where $\lim x(n)$ exists and belongs to $\partial\Delta^{m-1}$ with probability in $(0,1)$.

There are many motivations to consider Pólya urns on hypergraphs. On a practical level, while
Pólya urns on graphs can model networks where interactions occur in pairs,
Pólya urns on hypergraphs can model multivariate interaction.
The interest for models with multivariate interaction 
has been growing due to their ability to address situations with strong correlation 
in diferent levels. An example of this growth is the extensive study of hypergraph network 
in recent years, see e.g. the book \cite{DG23}. As a consequence, hypergraph networks have
found a wide variety of applications in different areas, such as medicine for survival prediction based on images \cite{DLZG20}
and drug discovery \cite{RJY+21}, as well as computer science
for visual classification \cite{YTW12, GWTJD12, ZLZJG18, DZFZJDG23}. 
On a theoretical level, as seen in this article, Pólya urns on hypergraphs provide new problems
and difficulties that are not present in its graph version. This allows for a unification of the theory,
thus stressing the main features of the topic.

Another motivation for studying Pólya urns on hypergraphs relates to a hypergraph
version of the example introduced in \cite{BBCL-15}, as we now explain.
Consider a market containing
$m$ companies and $N$ products. Each product is sold by a subset of the companies.
We can represent the market by a hypergraph $H=(V,E)$ where
$V$ is the set of companies and each $I\in E$ represents a product. 
The companies try to use their size and reputation to boost their sales.
Assuming that each client wants to buy one unit of each of the $N$ products at a moment,
his/her choice of a company that sells the product $I$ corresponds to adding a ball to the
respective company of the hyperedge $I$. Therefore, the Pólya urn on $H$ 
describes in broad strokes the long-term evolution of the market.

We can also enhance the model studied in \cite{SP-2000}, which considers
a network of agents that play repeated games in pairings and then improve their skill by
gaining experience. If the games are played by more than two agents according
to an underlying hypergraph, then the experience of the agents can be represented by 
a Pólya urn on a hypergraph, where the numbers of balls in the bins represent the skill levels
gained by the agents.

There are many recent variant and developments in the theory of Pólya urn schemes
and related topics,
see e.g. \cite{ACG-19,Aletti-20,AGS-22,Antunovic-16,BC-22,CDLM-19,CH-21,CJ-22,HHK-21,HHK-23,
Hofstad-16,KMS-22,RPP-22,Sahasrabudhe-16}.
Prior to its introduction in \cite{BBCL-15}, Pemantle
considered a Pólya urn model on $V=[m]$ with 
a single hyperedge $\{1,2,\ldots,m\}$, which is the same as a classical Pólya urn
with balls of $m$ colors \cite{Pemantle-92}. Up to this particular setting and to the authors' knowledge,
the present article gives the first general result for Pólya urns on hypergraphs.

The case of Pólya urns on graphs introduced in \cite{BBCL-15} actually considers,
for a fixed $\alpha>0$, the model of adding balls to the bins
with probability proportional to the $\alpha$--th power of its current number of balls.
Here, we only consider the case $\alpha=1$, which we call the {\em linear case}.
For graphs, the limiting behavior in the linear case has been completely
solved in the series of works \cite{BBCL-15,CL-14,Lim-16}. The final result is the following.

\begin{theorem}[\cite{BBCL-15,CL-14,Lim-16}]\label{thm-graphs}
Let $G$ be a finite connected graph.
\begin{enumerate}[{\rm (a)}]
\item If $G$ is not balanced bipartite, then there is a point $v=v(G)$
such that $x(n)$ converges to $v$ almost surely.
\item If $G$ is balanced bipartite, then there is a closed interval $\mathfs J=\mathfs J(G)$
such that $x(n)$ converges to a point of $\mathfs J$ almost surely.
\end{enumerate}
\end{theorem}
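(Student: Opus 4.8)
The plan is to deduce Theorem~\ref{thm-graphs} from the two general results above by unraveling what injectivity of $I(H)\restriction_\Gamma$ means for a graph. Recall from Section~\ref{ss.notation} that for a graph $G$ the incidence matrix acts, after identifying its columns with the edges, by $u=(u_1,\dots,u_m)\mapsto (u_i+u_j)_{\{i,j\}\in E}$. Hence its kernel is $\ker I(H)=\{u\in\R^m: u_i+u_j=0\text{ for every edge }\{i,j\}\in E\}$, and the relevant object is $\ker\bigl(I(H)\restriction_\Gamma\bigr)=\ker I(H)\cap\Gamma$. First I would compute this kernel. If $u_i+u_j=0$ along every edge of the connected graph $G$, then $u$ takes a constant absolute value whose sign flips across each edge; such a $u$ is nonzero precisely when $G$ is bipartite, in which case, writing the parts as $A$ and $B$, one has $u=c\,(\mathbf 1_A-\mathbf 1_B)$ for some $c\in\R$. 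Set $w=\mathbf 1_A-\mathbf 1_B$.

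Next I would intersect with $\Gamma$. Since $\sum_i w_i=|A|-|B|$, the vector $w$ lies in $\Gamma$ if and only if $|A|=|B|$; thus $\ker\bigl(I(H)\restriction_\Gamma\bigr)$ is trivial exactly when $G$ is non-bipartite or bipartite with unequal parts, and equals the line $\R w$ exactly when $G$ is balanced bipartite. In other words, $I(H)\restriction_\Gamma$ is injective if and only if $G$ is not balanced bipartite. This is the bridge between the hypothesis of Theorem~\ref{thm-graphs} and those of Theorems~\ref{Thm-injective} and~\ref{thm-not-injective}.

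Part~(a) is then immediate: if $G$ is not balanced bipartite the map $I(H)\restriction_\Gamma$ is injective, so Theorem~\ref{Thm-injective} produces a point $v=v(G)$ with $x(n)\to v$ almost surely. For part~(b), suppose $G$ is balanced bipartite. Then $\ker\bigl(I(H)\restriction_\Gamma\bigr)=\R w$ is one-dimensional, so the affine subspace carrying the set $\mathscr J=\mathscr J(G)$ of Theorem~\ref{thm-not-injective} is a line; being a closed connected subset of a line and supporting the limit set, which lies in the compact simplex $\Delta^{m-1}$, the set $\mathscr J$ is a closed bounded interval, namely the interval $\mathfs J=\mathfs J(G)$ of the statement. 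Theorem~\ref{thm-not-injective} already gives that the limit set of $x(n)$ is contained in $\mathfs J$ almost surely, and that $x(n)$ converges to a point of $\mathfs J$ as soon as the limit set avoids $\partial\Delta^{m-1}$ almost surely.

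Thus the whole of part~(b) reduces to the single claim that, for a balanced bipartite connected graph, the limit set of $x(n)$ almost surely does not meet $\partial\Delta^{m-1}$; equivalently $\liminf_n x_i(n)>0$ almost surely for every vertex $i$. I expect this boundary estimate to be the main obstacle, and indeed it is where the genuinely probabilistic rather than combinatorial work lies. The plan here is to show that every proper face of $\Delta^{m-1}$ is repelling for the underlying mean-field dynamics: on such a face some coordinate vanishes while, by connectivity, it has a neighbor with small or vanishing mass, which forces the corresponding inward drift to be large; one then combines this transverse instability with the non-degeneracy of the martingale increments driving $x(n)$ and appeals to the standard non-convergence-to-unstable-sets results for stochastic approximation, in the spirit of Pemantle and of Benaïm--Hirsch, to conclude that $x(n)$ cannot accumulate on $\partial\Delta^{m-1}$. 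Granting this, the second part of Theorem~\ref{thm-not-injective} yields that $x(n)$ converges almost surely to a point of $\mathfs J$, completing~(b).
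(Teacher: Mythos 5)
Your kernel computation is correct: for a connected graph, $\ker I(H)=\{0\}$ unless $G$ is bipartite, in which case it is $\R(\mathbf 1_A-\mathbf 1_B)$, and this line meets $\Gamma$ nontrivially iff $|A|=|B|$; so $I(H)\restriction_\Gamma$ is injective exactly when $G$ is not balanced bipartite, and part (a) does follow at once from Theorem~\ref{Thm-injective}. (Note, though, that this paper does not prove Theorem~\ref{thm-graphs} itself --- it is quoted from \cite{BBCL-15,CL-14,Lim-16}; your bridge via the incidence matrix matches the remark in the introduction that the general theorems specialize to graphs.)

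The gap is in part (b), and it sits exactly where you place the weight. Your reduction requires that the limit set almost surely avoids $\partial\Delta^{m-1}$, and your proposed mechanism --- ``every proper face is repelling, so apply non-convergence to unstable equilibria'' --- fails at the endpoints of $\mathfs J$, which themselves lie on $\partial\Delta^{m-1}$ and are \emph{non-unstable}. Concretely, for the $4$-cycle one has $\mathfs J=\Delta^{3}\cap\bigl(w+\R(1,-1,1,-1)\bigr)$ with $w=(\tfrac14,\tfrac14,\tfrac14,\tfrac14)$, and at the endpoint $v=(\tfrac12,0,\tfrac12,0)$ every edge sum equals $\tfrac12$, so $\frac{\partial L}{\partial v_i}(v)=-1+\tfrac14(2+2)=0$ for all $i$: the equilibrium is degenerate, not unstable in the sense of the paper, so Lemma~\ref{Lem: non-convergence} (the Pemantle-type exclusion you invoke) says nothing about it. Moreover there is no ``large inward drift'' at a face: since $F_i=v_i\,\frac{\partial L}{\partial v_i}$, every face is invariant and $F_i$ vanishes where $v_i=0$; repulsion could only come from the linearization, and at such endpoints the transverse eigenvalues degenerate to zero --- precisely the boundary degeneracy the paper identifies as the obstruction to its own conjecture (cf.\ the cube example after Lemma~\ref{lemma-neg-eig}). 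A route that does work, and is essentially how \cite{CL-14,Lim-16} complete the graph case, exploits the one-dimensionality of $K$: the limit set is a closed \emph{connected} subset of the interval $\mathfs J$ (Theorem~\ref{thm:limit set theorem} plus the first part of Theorem~\ref{thm-not-injective}), hence either a singleton --- in which case $x(n)$ converges, possibly to a boundary endpoint --- or a nondegenerate subinterval, which necessarily contains a relative-interior point of $\mathfs J$; such a point has full support, so it lies in $\Lambda_{[m]}$, and the local transverse-contraction argument in the proof of Theorem~\ref{thm-not-injective} (Lemmas~\ref{lemma-neg-eig} and~\ref{lemma-iteration}) then forces convergence, contradicting nondegeneracy. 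This dichotomy sidesteps the boundary-avoidance claim altogether, which your sketch neither proves nor can prove by instability alone.
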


Above, the notion of a balanced bipartite graph means that the vertex set has a partition 
$V=A\cup B$ with $|A|=|B|$ such that every edge of $G$ has one endpoint in $A$ and one in $B$.

The proofs of Theorems \ref{Thm-injective} and \ref{thm-not-injective} follow very closely
the approach used to prove Theorem \ref{thm-graphs}, which consists of
writing $x(n)$ as a {\em stochastic approximation algorithm}, i.e.
as a small perturbations of a vector field $F$, see Section~\ref{s-saa}. By the work of 
Benaïm \cite{Benaim-96,Benaim-99}, it is possible to relate the limiting behavior of 
$x(n)$ with dynamical properties of $F$. 

The present article uses in great extent the tools developed in \cite{BBCL-15,CL-14,Lim-16}. 
One contribution of our work is to identify the combinatorial object that is related to the
equilibria set of $F$, which is the {\em incidence matrix} of the hypergraph, 
see Section \ref{ss.notation} for the definition. This notion was not present in the previous cited works,
and its introduction to the subject greatly clarifies the relationship between $x(n)$ and 
the vector field $F$. When restricted
to $\Gamma$ (which is the tangent space of the simplex $\Delta^{m-1}$), the incidence matrix provides 
the space of candidates for limits of $x(n)$, which is a subset $\mathfs J$ of an affine subspace
parallel to $K={\rm ker}(I(H)\restriction_{\Gamma})$.
We point out that our result applies, in particular,
to Pólya urns on graphs, in which the role of the incidence matrix was not evident, and rather
the adjacency matrix was used.

Another contribution of our work is to show that,
for points in the interior of $\mathfs J$, the behavior of $F$ in the transverse
direction to $\mathfs J$ is contractive, see Lemma \ref{lemma-neg-eig}.
The proof of this fact for graphs was wrongly obtained in \cite{Lim-16},
so we take the chance to correct this issue. 

We believe that Theorem \ref{thm-not-injective} can be improved to show that
$x(n)$ converges almost surely, even when its limit set is contained in $\partial\Delta^{m-1}$
with positive probability, hence we state the following conjecture.

\begin{conjecture}\label{conj:hypergraph-convergence}
For any finite hypergraph, the random process $x(n)$ converges almost surely.
\end{conjecture}

\subsection{Notation}\label{ss.notation}

We consider a hypergraph $H=(V,E)$ where $V=[m]=\{1,2,\ldots,m\}$ and $|E|=N$.
An element $I\in E$ is called an {\em hyperedge}. We assume that
every $i\in[m]$ belongs to at least one hyperedge.

\medskip
\noindent
{\sc Simplex $\Delta^{m-1}$:} We let 
$$
\Delta^{m-1}=\{(x_1,\ldots,x_m)\in\R^m:x_i\geq 0 \text{ and }x_1+\cdots+x_m=1\}$$
denote the $(m-1)$--dimensional simplex, which is a manifold with boundary.

\medskip
\noindent
{\sc Tangent space $\Gamma$:} We let
$$
\Gamma=\{(x_1,\ldots,x_m)\in\R^m:x_1+\cdots+x_m=0\},
$$
which is the tangent space of $\Delta^{m-1}$ at every $v\in {\rm int}(\Delta^{m-1})$
(we take the relative interior). 

\medskip
Given a hyperedge $I\in E$ and $v\in \R^m$, we write
$$
v_I:=\sum_{i\in I}v_i.
$$
Recall that we start with $B_1(0),\ldots,B_m(0)\geq 1$ balls
in the vertices $1,\ldots,m$ respectively, and that $N_0=\sum_{i=1}^m B_i(0)$ denotes
the initial total number of balls. After $n$ steps,
  $B_1(n),\ldots,B_m(n)$ denote the number of balls in the vertices $1,\ldots,m$ respectively. 
The total number of balls after step $n$ is $N_0+nN=\sum_{i=1}^m B_i(n)$.

\medskip
\noindent
{\sc Incidence matrix $I(H)$:} The {\em incidence matrix} of $H$ is the matrix
$I(H)$ with dimensions $N\times m$, indexed by $E\times V$,
whose entry $(I,i)$ is 1 if $i\in I$ and 0 otherwise.

\medskip 
\noindent
{\sc Subspace $K$:} We let $K={\rm ker}(I(H)\restriction_\Gamma)$.

\subsection{Some examples}\label{ss-examples}

Every solid $S$ can be viewed as an hypergraph $H$, whose hyperedges are the faces
of $S$. For example, a tetrahedron is a hypergraph with $V =\{1,2,3,4\}$ and
hyperedges
$E = \{\{1,2,3\},\{1,2,4\}, \{1,3,4\},\{2,3,4\}\}$. For the platonic solids, the kernel of $I(H)$ coincides
with $K$, and their dimensions are:
\begin{center}
\begin{tabular}{ |l||c| } 
\hline
Platonic solid & \hspace{.6cm} $\dim K$ \hspace{.6cm} \\
\hline \hline
Tetrahedron & $0$ \\ \hline
Cube &  4 \\ \hline
Octahedron & 2 \\ \hline
Icosahedron & $0$ \\ \hline
Dodecahedron & 8 \\ \hline
\end{tabular}
\end{center}
In all cases, the uniform measure $\left(\tfrac{1}{m},\ldots,\tfrac{1}{m}\right)$ 
is an equilibrium. For the tetrahedron and the icosahedron,
the process $x(n)$ converges to the uniform measure almost surely.
See a detailed discussion about the cube in Section \ref{ss-concluding}.

\section{Stochastic approximation algorithms}\label{s-saa}

P\'olya urns on graphs are examples of stochastic approximation algorithms \cite[\S 2]{BBCL-15}.
In this section, we show that the same occurs to Pólya urns on hypergraphs.
From now on, we fix a hypergraph $H$.

\medskip
\noindent
{\sc Stochastic approximation algorithm:} A {\em stochastic approximation algorithm} is a discrete time process 
$\{x(n)\}_{n\geq 0}\subset\mathbb R^m$
of the form
$$
x(n+1)-x(n)=\gamma_{n+1}G(x(n), \xi(n+1))
$$
where $\{\gamma_n\}_{n\ge 1}$ is a sequence of nonnegative scalars, $G:\mathbb R^m\times \R^m\to\mathbb R^m$ is a measurable function that characterizes the algorithm,
$\{\xi(n)\}_{n\geq 1}\subset\mathbb R^m$ is a sequence of random inputs 
and $G(x(n), \xi(n+1))$ is an
observable.

\medskip
Above, $\{\gamma_n\}_{n\ge 1}$ is a deterministic sequence.
For simplicity, we just write $x(n)$ for the sequence $\{x(n)\}_{n\geq 0}$.
Let $\mathfs F_n$ be the sigma-algebra generated by the process up to step $n$,
i.e. generated by $x(0),\xi(1),\ldots,\xi(n)$. 
Recall that we are fixing a hypergraph $H$, and letting $\{x(n)\}_{n\geq 0}$ denote the discrete process of 
Pólya urns on $H$.

\begin{lemma}\label{lemma.SAA}
The process $x(n)$ is a stochastic approximation algorithm of the form
$$
x(n+1) - x(n) = \gamma_{n+1}(F(x(n)) + u_{n+1})
$$
where $F: \R^m \to \R^m$ is a vector field, $\{\gamma_n\}_{n\geq 1}$ is the sequence of
nonnegative scalars, and $\{u_n\}_{n\geq 1}$ is a sequence of random inputs such that $\mathbb{E}[u_{n+1}|\mathscr{F}_n] = 0$.
\end{lemma}

\begin{proof}
Consider a family of $0,1$--valued random variables $\{\delta_{I \to i}(n+1)\}_{I\in E,i\in I}$ such that:
\begin{enumerate}[$\circ$]
\item If $I_1\neq I_2$, then $\delta_{I_1 \to i_1}(n+1)$ and $\delta_{I_2\to i_2}(n+1)$ are independent 
for all $i_1\in I_1,i_2\in I_2$;
\item $\sum_{i \in I}\delta_{I \to i}(n+1)=1$ for every $I\in E$;
\item $\EE{\delta_{I \to i}(n+1)}{\mathscr{F}_n}=\dfrac{x_i(n)}{x_I(n)}$ for every $i\in I$.
\end{enumerate}
The random variable $\delta_{I \to i}(n+1)$ represents the addition or not, at step $n+1$,
of the ball thrown at the hyperedge $I$ to the vertex $i$.
Then $C_i(n+1) = \sum_{I\ni i} \delta_{I \to i}(n+1)$
 is the number of balls added to the vertex $i$ at step $n+1$. Therefore
\begin{align*}
&\ x_i(n+1)-x_i(n)=\frac{B_i(n)+C_i(n+1)}{N_0 +(n+1)N}-\frac{B_i(n)}{N_0 + nN}\\
&=\frac{B_i(n)}{N_0 +(n+1)N}\left(1-\frac{N_0 +(n+1)N}{N_0 +nN}\right)+\frac{C_i(n+1)}{N_0 +(n+1)N}\\
&=\frac{N}{N_0 +(n+1)N}\left(-\frac{B_i(n)}{N_0 +nN}\right)+\frac{C_i(n+1)}{N_0 +(n+1)N}\\
&=\frac{1}{\frac{N_0}{N} +(n+1)}\left(-x_i(n)+\frac{1}{N}C_i(n+1)\right).
\end{align*}
Letting $\gamma_{n+1}=\frac{1}{\frac{N_0}{N} +(n+1)}$, $\xi_i(n+1)=\frac{1}{N}C_i(n+1)$ and
$\xi(n+1) = (\xi_1(n+1),\ldots,\xi_m(n+1))$, we get that
$$
x(n+1)-x(n) = \gamma_{n+1}(-x(n)+\xi(n+1)).
$$
Hence $x(n)$ is a stochastic approximation algorithm.
To finish the proof, we adjust $\xi(n+1)$ to have zero expectation. 
We have
$$
\EE{\xi_i(n+1)}{\mathscr{F}_n}=\mathbb{E}\left[\frac{1}{N}C_i(n+1)\Biggr |\mathscr{F}_n\right]
=\frac{1}{N}\sum_{I\ni i}\EE{\delta_{I\to i}(n+1)}{\mathscr{F}_n}
=\frac{1}{N}\sum_{I\ni i}\frac{x_i(n)}{x_I(n)}\,\cdot
$$        
Defining the vector field $F=(F_1,\ldots,F_m)$ by 
$F_i(v)=-v_i+\frac{1}{N}\sum\limits_{I\ni i}\frac{v_i}{v_I}$ and the vector
$u_{n+1}=\xi(n+1)-\EE{\xi(n+1)}{\mathscr{F}_n}$, we obtain that
$$
x(n+1)-x(n) = \gamma_{n+1}\left[F(x(n))+u_{n+1}\right],
$$
with $\EE{u_{n+1}}{\mathfs F_n}=0$. This concludes the proof.
\end{proof}

Since $F$ involves fractions which can have zero denominator, we now give its proper definition.
Fix $c<\tfrac{1}{N}$.

\medskip
\noindent
{\sc Set $\Delta$:} We let $\Delta=\{(x_1,\ldots,x_m)\in\Delta^{m-1}:x_I \geq c,\forall I \in E\}$.

\medskip
\noindent
{\sc Vector field $F$:} We define $F=(F_1,\ldots,F_m):\Delta\to\R^m$ where
$$
F_i(v)=-v_i+\frac{1}{N}\sum_{I\ni i}\frac{v_i}{v_I}\,\cdot
$$

\medskip
Note that $F(v)\in \Gamma$ for every $v\in\Delta$ and that 
$F$ is Lipschitz. Now we prove that $F$ defines a semiflow. For that, we consider the ODE
$$
\frac{dv(t)}{dt}=F(v(t))
$$
and prove that $\Delta$ is positively invariant under this ODE (we will also say that $\Delta$
is positively invariant under $F$). Given $I\in E$, we have
$$
\frac{dv_I}{dt}=\sum_{i\in I}F_i(v)= \sum_{i\in I}\left(-v_i+\frac{1}{N}\sum_{J\ni i}\frac{v_i}{v_J}\right)
\geq -v_I+\frac{1}{N}\sum_{i\in I}\frac{v_i}{v_I}
=-v_I+\frac{1}{N}\,\cdot
$$
If $v\in\partial\Delta$ with $v_I=c$, then
$$ 
\frac{dv_I}{dt}=-c+\frac{1}{N}>0
$$
and so $F$ points inwards $\Delta$ at its boundary.

\subsection{The vector field $F$ is gradient-like}
Now we prove that $F$ is gradient-like. The proof is similar
to the one for Pólya urns on graphs \cite[Lemma 4.1]{BBCL-15}.
Let us recall some definitions.

\medskip
\noindent
{\sc Equilibria set:} A point $v\in\Delta$ is called an {\em equilibrium} for $F$ if $F(v)=0$. 
The {\em equilibria set} of $F$ is denoted by $\Lambda=\{v\in\Delta:v\text{ is an equilibrium}\}$.

\medskip
\noindent
{\sc Strict Lyapunov function:} A continuous map $L:\Delta\to\mathbb R$ is called a
{\it strict Lyapunov function for $F$} if it is strictly monotone along any integral curve of $F$ outside of $\Lambda$.
In this case, we call $F$ {\em gradient-like}.

\medskip
Let $L:\Delta\rightarrow\mathbb R$ be the function
$$
L(v_1,\dots,v_m)=-\sum_{i=1}^{m}v_i+\frac{1}{N}\sum_{I\in E}\log v_I=-1+\frac{1}{N}\sum_{I\in E}\log v_I.
$$
This is the version for hypergraphs of the strict Lyapunov function used for Pólya urns on graphs
in \cite{BBCL-15}. The next lemma is \cite[Lemma 4.1]{BBCL-15} for hypergraphs.

\begin{lemma}
The function $L$ is a strict Lyapunov function for $F$. Therefore, $F$ is gradient-like.
\end{lemma}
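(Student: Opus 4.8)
The plan is to show that $L$ strictly increases along every integral curve of $F$ that is not an equilibrium, by computing $\tfrac{d}{dt}L(v(t))$ and exhibiting it as a sum of squares. First I would fix an integral curve $v(t)$, so that $\tfrac{dv}{dt}=F(v(t))$. Since $\Delta$ is positively invariant under $F$ (shown above), the curve stays in $\Delta$, where $v_i\geq 0$ and $v_I\geq c>0$, so $L$ and all the expressions below are well defined and smooth. By the chain rule,
$$
\frac{d}{dt}L(v(t))=\sum_{i=1}^m\frac{\partial L}{\partial v_i}\,F_i(v).
$$
Using $\frac{\partial v_I}{\partial v_i}=\ind_{i\in I}$, a direct computation gives
$$
\frac{\partial L}{\partial v_i}=-1+\frac{1}{N}\sum_{I\ni i}\frac{1}{v_I}\,\cdot
$$

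The key step, and the one that makes the whole argument work, is to notice that the vector field is exactly this partial derivative weighted by $v_i$:
$$
F_i(v)=-v_i+\frac{1}{N}\sum_{I\ni i}\frac{v_i}{v_I}=v_i\left(-1+\frac{1}{N}\sum_{I\ni i}\frac{1}{v_I}\right)=v_i\,\frac{\partial L}{\partial v_i}\,\cdot
$$
Substituting this into the chain rule expression collapses it into
$$
\frac{d}{dt}L(v(t))=\sum_{i=1}^m v_i\left(\frac{\partial L}{\partial v_i}\right)^2\geq 0,
$$
where nonnegativity follows because each $v_i\geq 0$ on $\Delta$.

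It remains to identify when the derivative vanishes. Since every summand $v_i(\partial L/\partial v_i)^2$ is nonnegative, the sum is zero if and only if $v_i(\partial L/\partial v_i)^2=0$ for each $i$, i.e. $v_i=0$ or $\partial L/\partial v_i=0$ for each $i$. By the factorization $F_i(v)=v_i\,\partial L/\partial v_i$, this is precisely the condition $F_i(v)=0$ for all $i$, that is, $v\in\Lambda$. Hence outside $\Lambda$ we have $\tfrac{d}{dt}L(v(t))>0$, so $L$ is strictly increasing along any integral curve not contained in the equilibria set, which makes $L$ a Lyapunov function and $F$ gradient-like. I do not expect a serious obstacle here: once the factorization $F_i=v_i\,\partial L/\partial v_i$ is spotted the argument is a sum-of-squares computation, and the only point requiring a little care is that on $\partial\Delta$ some coordinates $v_i$ may vanish, so one must match the zero set of $\tfrac{d}{dt}L$ with $\Lambda$ through the factorization rather than by requiring all the partials $\partial L/\partial v_i$ to vanish.
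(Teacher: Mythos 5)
Your proposal is correct and follows essentially the same route as the paper: the chain rule, the factorization $F_i(v)=v_i\,\partial L/\partial v_i$, the resulting sum of nonnegative terms $\sum_i v_i(\partial L/\partial v_i)^2$, and the identification of its zero set with $\Lambda$ via that same factorization. The extra care you take about vanishing coordinates on $\partial\Delta$ is a sensible elaboration of a point the paper states more tersely, but it is not a different argument.
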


\begin{proof}
By direct calculation, 
\begin{equation}\label{partial-derivative}
\frac{\partial L}{\partial v_i}=-1+\frac{1}{N}\sum_{I\ni i}\frac{1}{v_I}
\end{equation}
and so 
$$
\frac{dv_i}{dt}=F_i(v(t))=v_i\left(-1+\frac{1}{N}\sum_{I\ni i}\frac{1}{v_I}\right)=v_i\frac{\partial L}{\partial v_i}\,\cdot
$$ 
If $v=(v_1(t),\dots,v_m(t))$, $t\geq 0$, is an integral curve of $F$, then 
$$
\frac{d}{dt}(L\circ v)=\sum_{i=1}^{m}\frac{\partial L}{\partial v_i}\frac{dv_i}{dt}=\sum_{i=1}^{m}v_i\left(\frac{\partial L}{\partial v_i}\right)^2\geq 0.
$$ 
The equality occurs if and only if $v_i\left(\frac{\partial L}{\partial v_i}\right)^2=0$ for all $i \in [m]$.
This holds if and only if $F(v)=0$, hence $L$ is a strict Lyapunov function for $F$.
\end{proof}

\subsection{Relation between $x(n)$ and $F$}\label{ss-relation-x-F}

In the sequel, we state a result of Benaïm
that allows to relate asymptotic properties of $x(n)$ with
dynamical properties of $F$. 
The theorem is not stated in its whole generality, but instead specialized to our situation.
 
\begin{theorem}[\cite{Benaim-96,Benaim-99}]\label{thm:limit set theorem}
Let $F:\R^m \to \R^m$ be a gradient-like continuous vector field with unique integral curves, 
let $\Lambda$ be its equilibria set, let $L$ be a strict Lyapunov function,
and let $x(n)$ be a solution to the recursion
$$
x(n+1)-x(n)=\gamma_{n+1}[F(x(n))+u_{n+1}],
$$
where $\gamma_n$ is a decreasing sequence satisfying
$\lim\limits_{n\to\infty}\gamma_n=0$ and $\sum\limits_{n\geq 1}\gamma_n = \infty$,
and $u_n\in\R^m$. Assume that:
\begin{enumerate}[{\rm (1)}]
        \item the sequence $x(n)$ is bounded,\\
        \item for each $T>0$,
        $$
        \lim_{n \to \infty}\left(\sup_{\{k: 0\leq \tau_k-\tau_n\leq T\}}\left\lVert \sum_{i=n}^{k-1}\gamma_i u_i \right\rVert \right)=0,
        $$
        where $\tau_n = \sum\limits_{i=0}^{n-1}\gamma_i$, and
        \item $L(\Lambda)\subset \R$ has empty interior.
\end{enumerate}
Then the limit set of $x(n)$ is a compact connected subset of $\Lambda$.
\end{theorem}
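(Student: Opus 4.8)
The plan is to reduce the statement to Benaïm's theory of asymptotic pseudotrajectories. First I would pass from the discrete recursion to a continuous-time interpolation. Using the time scale $\tau_n=\sum_{i=0}^{n-1}\gamma_i$, which satisfies $\tau_n\to\infty$ because $\sum_n\gamma_n=\infty$, define a continuous curve $X:[0,\infty)\to\R^m$ by $X(\tau_n)=x(n)$ and affine interpolation on each interval $[\tau_n,\tau_{n+1}]$, so that $X(\tau_n+s)=x(n)+s\,[F(x(n))+u_n]$ for $0\le s\le\gamma_n$. Let $\Phi$ denote the semiflow generated by the ODE $\dot v=F(v)$, which is well defined and has unique integral curves by hypothesis.

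The core step is to show that $X$ is an \emph{asymptotic pseudotrajectory} of $\Phi$, i.e. that for every $T>0$
$$
\lim_{t\to\infty}\ \sup_{0\le s\le T}\bigl\lVert X(t+s)-\Phi_s(X(t))\bigr\rVert=0.
$$
To prove this I would fix the window $[\tau_n,\tau_n+T]$ and compare the telescoped recursion $x(k)=x(n)+\sum_{i=n}^{k-1}\gamma_i F(x(i))+\sum_{i=n}^{k-1}\gamma_i u_i$ with $\Phi_{\tau_k-\tau_n}(x(n))=x(n)+\int_0^{\tau_k-\tau_n}F(\Phi_r(x(n)))\,dr$. By the boundedness hypothesis (1) the whole trajectory stays in a fixed compact set, on which $F$ is bounded and uniformly continuous; hence the Riemann sum $\sum_{i=n}^{k-1}\gamma_i F(x(i))$ differs from the integral by a quantity tending to $0$ uniformly over the window, because $\gamma_n\to0$. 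The accumulated noise $\sum_{i=n}^{k-1}\gamma_i u_i$ is controlled directly by hypothesis (2). Feeding these two estimates into a Gronwall inequality (using the uniform-continuity bound for $F$ on the compact set) closes the comparison and yields the pseudotrajectory property, the affine pieces contributing only an $O(\gamma_n)$ error.

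Once $X$ is a bounded asymptotic pseudotrajectory, I would invoke the structure theorem of Benaïm and Hirsch: the limit set $A$ of $X$ — which coincides with the limit set of $x(n)$ — is nonempty, compact, connected, invariant under $\Phi$, and \emph{internally chain transitive}. This already delivers the connectedness asserted in the conclusion.

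It remains to show $A\subseteq\Lambda$, and here the strict Lyapunov function and hypothesis (3) enter. Since $A$ is connected and $L$ continuous, $L(A)$ is an interval. The decisive point is that internal chain transitivity forces $L(A)\subseteq L(\Lambda)$: along any genuine flow segment $L$ is nonincreasing, while in an $(\varepsilon,T)$-pseudo-orbit the jumps perturb $L$ by $O(\varepsilon)$, so a pseudo-orbit that returns near its starting point can do so only by spending its flow time near the critical set where $L$ is essentially flat; letting $\varepsilon\to0$ pins every value of $L$ on $A$ to a value attained on $\Lambda$. Because $L(\Lambda)$ has empty interior by (3), the interval $L(A)$ must collapse to a single point $c$. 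Finally, $A\subseteq L^{-1}(c)$ together with invariance gives $L(\Phi_t(a))=c$ for all $a\in A$ and $t\ge0$, and strictness of $L$ off $\Lambda$ forces $a\in\Lambda$; thus $A\subseteq\Lambda$, completing the proof. I expect the main obstacle to be this last implication — the Conley-type pseudo-orbit argument showing $L(A)\subseteq L(\Lambda)$ — since everything else is either a direct consequence of the hypotheses or a standard Gronwall estimate, while the empty-interior condition (3) is exactly what turns chain transitivity into the collapse of $L(A)$ to a point.
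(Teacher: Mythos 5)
This theorem is not proved in the paper at all: it is imported from Benaïm \cite{Benaim-96,Benaim-99}, and your proposal is a faithful reconstruction of exactly the argument given in those references --- interpolation of the recursion into an asymptotic pseudotrajectory of the semiflow (hypotheses (1)--(2) plus the Riemann-sum/Gronwall comparison), the Benaïm--Hirsch limit set theorem giving that the limit set is compact, connected, invariant and internally chain transitive, and finally the Conley/Lyapunov argument that an internally chain transitive set must lie in $\Lambda$ once $L(\Lambda)$ has empty interior. The only caveat worth noting is that your Gronwall step implicitly uses a local Lipschitz bound on $F$, whereas the statement only assumes continuity with unique integral curves (Benaïm handles this by a compactness--uniqueness argument instead); this is immaterial for the paper's application, where $F$ is Lipschitz.
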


Above, the $u_n$'s are fixed. Not every realization of the
Pólya urn on hypergraphs satisfies the above conditions, but it does almost surely,
as we will show in the next proposition.
First, we need some notation.

\medskip
\noindent
{\sc Faces of $\Delta$:} For each $S \subset [m]$, we let
$$
\Delta_S = \{v \in \Delta : v_i = 0 \text{ iff } i\notin S\}
$$
denote the {\em face} of $\Delta$ determined by $S$.
Each $\Delta_S$ is a manifold with boundary, positively invariant under $F$.

\medskip
\noindent
{\sc $S$--singularity for $L$:} We call $v \in \Delta_S$ an {\em $S$--singularity for $L$}
or simply {\em $S$--singularity} if
$$
\dfrac{\partial L}{\partial v_i}(v) = 0 \text{ for all } i \in S.
$$
Let $\Lambda_S$ denote the set of $S-$singularities. Since $F_i(v)=v_i\tfrac{\partial L}{\partial v_i}(v)$,
we have $\Lambda=\bigcup\limits_{S\subset [m]}\Lambda_S$.

\begin{proposition}\label{prop: limit set}
Let $x(n)$ be the random process of Pólya urn on $H$. 
Then the limit set of $x(n)$ is a compact connected subset of $\Lambda$
almost surely.
\end{proposition}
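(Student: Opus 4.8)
The plan is to verify, for the random process $x(n)$ arising from the Pólya urn on $H$, the three hypotheses of Theorem~\ref{thm:limit set theorem}, each of which holds almost surely, and then apply that theorem to conclude that the limit set is a connected subset of $\Lambda$. The vector field $F$ is gradient-like with Lyapunov function $L$ and (since $F$ is Lipschitz on $\Delta$) has unique integral curves, so the structural hypotheses on $F$ are already in place; what remains is to check conditions (1), (2), and (3) for our specific $\gamma_n$ and $u_n$. Note that $\gamma_n=\tfrac{1}{N_0/N+(n+1)}$ is decreasing, tends to $0$, and has divergent sum, as required.

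First I would establish condition (1), boundedness of $x(n)$. This is immediate and deterministic: since $x_i(n)=B_i(n)/(N_0+nN)$ with $B_i(n)\geq 0$ and $\sum_i B_i(n)=N_0+nN$, we have $x(n)\in\Delta^{m-1}$ for every $n$, so the sequence lives in a compact set. One should also confirm that $x(n)$ in fact remains in $\Delta$ (i.e. $x_I(n)\geq c$), or rather that the relevant denominators never vanish; since each $B_i(n)\geq B_i(0)\geq 1$ and $N_0+nN$ grows, the quantities $x_I(n)$ stay bounded below, so $F(x(n))$ is well-defined along the trajectory and the recursion from Lemma~\ref{lemma.SAA} is genuinely of the stochastic-approximation form with these $\gamma_n$ and $u_n$.

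The heart of the argument is condition (2), the vanishing of the weighted noise sums. Here I would exploit the martingale structure: by construction $\E{u_n\mid\mathfs F_n}=0$, so $M_k:=\sum_{i=0}^{k-1}\gamma_i u_i$ is a martingale. Because each $C_i(n+1)$ is a sum of at most $N$ indicator variables, the increments $u_n$ are uniformly bounded, say $\|u_n\|\leq C$. Then $\sum_{n\geq 0}\gamma_n^2<\infty$ (as $\gamma_n\sim 1/n$) gives $\sum_n \E{\|\gamma_n u_n\|^2}<\infty$, so the martingale $M_k$ is $L^2$-bounded and converges almost surely by the martingale convergence theorem. Almost-sure convergence of $M_k$ forces its tail oscillations to vanish, and in particular $\sup_{k\geq n}\|M_k-M_n\|\to 0$ almost surely; this dominates the supremum in condition (2) over any window $\{k:0\leq\tau_k-\tau_n\leq T\}$, yielding the stated limit almost surely. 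I expect this $L^2$-martingale step to be the main obstacle, in the sense that it is the only place requiring genuine probabilistic input (the bounded-increment estimate together with $\sum\gamma_n^2<\infty$ must be made precise).

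Finally, condition (3) requires that $L(\Lambda)$ have empty interior in $\R$. This is a deterministic, non-probabilistic fact about $F$ and $L$, and I would argue it using the decomposition $\Lambda=\bigcup_{S\subset[m]}\Lambda_S$ into finitely many pieces: on each face $\Delta_S$, the singularity condition $\partial L/\partial v_i=0$ for all $i\in S$ cuts out a set on which $L$ is, by Sard's theorem or by an explicit analysis of the critical values, mapped to a set of measure zero, hence with empty interior. A finite union of such sets still has empty interior, giving (3). With all three conditions verified almost surely, Theorem~\ref{thm:limit set theorem} applies on the full-measure event, and the limit set of $x(n)$ is almost surely a connected subset of $\Lambda$.
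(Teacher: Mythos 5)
Your proposal is correct and follows essentially the same route as the paper: condition (1) is immediate from $x(n)\in\Delta^{m-1}$, condition (2) follows from the almost sure convergence of the $L^2$-bounded martingale $M_n=\sum_{i<n}\gamma_iu_i$ (bounded increments plus $\sum\gamma_n^2<\infty$), and condition (3) follows from Sard's theorem applied to $L\restriction_{\Delta_S}$ on each face, the finite union of measure-zero images having empty interior. No meaningful differences from the paper's argument.
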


\begin{proof}
The sequence $\gamma_{n+1}=\frac{1}{\frac{N_0}{N} +(n+1)}$
is decreasing with $\lim\limits_{n\to\infty}\gamma_n=0$ and 
$\sum\limits_{n\geq 0}\gamma_n = \infty$.
Note that $x(n)$ is bounded and so (1) always holds.
We show that (2) holds almost surely. For each $n\geq 1$, let
$$
M_n = \sum_{i=1}^{n}\gamma_i u_i.
$$
The sequence of random variables $\{M_n\}_{n\geq 1}$ is a martingale adapted 
to the filtration $\{\mathscr{F}_n\}_{n\geq 1}$:
$$
\EE{M_{n+1}}{\mathscr{F}_n} = \sum_{i=1}^{n}\gamma_i u_i + \EE{\gamma_{n+1}u_{n+1}}{\mathscr{F}_n}
=\sum_{i=1}^{n}\gamma_i u_i = M_n.
$$
Furthermore, since $|u_n|\leq 1$, we have 
$$
\sum_{i=1}^n\EE{\|M_{i+1}-M_i \|^2}{\mathfs F_i} \leq \sum_{i=1}^n \gamma_{i+1} ^2 \leq \sum_{i\geq 1}\gamma_i ^2
< \infty.
$$
This latter estimate implies that $\{M_n\}_{n\geq 1}$ converges almost surely to
a finite random vector, see e.g. \cite[Theorem 5.4.9]{Durrett-book}. In particular, $\{M_n\}_{n\geq 1}$
is a Cauchy sequence almost surely, and so condition (2) holds almost surely.

It remains to check condition (3). Let $S\subset[m]$. The restriction $L\restriction_{\Delta_S}$ 
is a $C^{\infty}$ function,
thus, by the Sard theorem, $L(\Lambda_S)$ has zero Lebesgue measure. This implies
that $L(\Lambda)=\bigcup\limits_{S\subset[m]}L(\Lambda_S)$ has zero Lebesgue measure as well.
In particular, it has empty interior.
\end{proof}

We note that $L$ is a (not necessarily strictly) concave function, hence so is
its restriction $L\restriction_{\Delta_S}$. This implies that $\Lambda_S$ is equal to the set
of global maxima of $L\restriction_{\Delta_S}$.

\section{Unstable and non-unstable equilibria}

In this section, we restrict the possible limits of $x(n)$.
The idea, following \cite{Pemantle-92}, is very similar to the one developed
in \cite{BBCL-15}, and consists of showing that there is zero probability of converging
to an unstable equilibrium (we will define this notion shortly). The sole difference is that,
contrary to the referred works, here we can have an uncountable number of unstable
equilibria. 
Recall that $K={\rm ker}(I(H)\restriction_{\Gamma})$. The next lemma characterizes $\Lambda_S$
when it is non-empty. This result will not be used here, but since it is short and we believe it
will be useful for later work, we have decided to include it.

\begin{lemma}\label{lemma.equil.face}
Let $S\subset[m]$ and assume that $\Lambda_S\neq\emptyset$. For
every $v\in \Lambda_S$, it holds that
$$
\Lambda_S=\Delta_S\cap(v+K)=\{v+w\in \Delta_S:w\in K\}.
$$
\end{lemma}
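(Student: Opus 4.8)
The plan is to prove the two set equalities separately, noting that the second one, $\Delta_S\cap(v+K)=\{v+w\in\Delta_S:w\in K\}$, is merely the definition of the coset $v+K=\{v+w:w\in K\}$ intersected with $\Delta_S$, so all the content lies in the identity $\Lambda_S=\Delta_S\cap(v+K)$. The guiding observation throughout is that a vector $w\in K$ satisfies $w\in\Gamma$ and $w_I=0$ for every $I\in E$: indeed $K=\ker(I(H)\restriction_\Gamma)$, and the $I$-th coordinate of $I(H)w$ is exactly $w_I=\sum_{i\in I}w_i$. Hence moving by an element of $K$ leaves every hyperedge sum $v_I$ unchanged.

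First I would prove the inclusion $\Delta_S\cap(v+K)\subset\Lambda_S$. Given $w\in K$ with $v+w\in\Delta_S$, the observation above gives $(v+w)_I=v_I$ for all $I$, whence $\frac{\partial L}{\partial v_i}(v+w)=-1+\frac1N\sum_{I\ni i}\frac{1}{(v+w)_I}=\frac{\partial L}{\partial v_i}(v)=0$ for each $i\in S$, using \eqref{partial-derivative}. Thus $v+w$ is an $S$-singularity, i.e. $v+w\in\Lambda_S$. This direction is immediate.

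The harder inclusion is $\Lambda_S\subset v+K$, and this is where I expect the main work. Let $v'\in\Lambda_S$ and set $w:=v'-v$. Since $v,v'\in\Delta_S$ we have $v_i=v'_i=0$ for $i\notin S$, so $w$ is supported on $S$; and since both lie in the simplex, $\sum_i w_i=0$, i.e. $w\in\Gamma$. It remains to show $w_I=0$ for every $I\in E$, which will give $w\in K$. To this end I would study the one-variable function $g(t)=L(v+tw)$ on $[0,1]$. The segment $v+tw=(1-t)v+tv'$ stays in $\Delta_S$ (hence in $\Delta$, so every $(v+tw)_I>0$ along it), so $g$ is smooth. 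Differentiating, $g'(t)=\sum_{i\in S}w_i\,\frac{\partial L}{\partial v_i}(v+tw)$, and since both $v$ and $v'$ are $S$-singularities we get $g'(0)=g'(1)=0$. The key computation is the second derivative: from $\frac{\partial^2 L}{\partial v_i\partial v_j}=-\frac1N\sum_{I:\,i,j\in I}\frac{1}{v_I^2}$ one finds, after collecting terms hyperedge by hyperedge, $g''(t)=w^\top(\mathrm{Hess}\,L)\,w=-\frac1N\sum_{I\in E}\frac{w_I^2}{(v+tw)_I^2}\le 0$.

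Thus $g$ is concave on $[0,1]$ with $g'(0)=g'(1)=0$; as $g'$ is non-increasing and vanishes at both endpoints, it vanishes identically, so $g''\equiv 0$, forcing $\sum_{I}w_I^2/(v+tw)_I^2=0$ and therefore $w_I=0$ for every $I\in E$. This yields $w\in K$ and $v'=v+w\in v+K$, completing the proof. The only delicate point is this quadratic-form computation together with the boundary-derivative argument; it is in effect the local form of the concavity of $L$ recorded after Proposition~\ref{prop: limit set}. Indeed, one could instead invoke that remark directly: $v$ and $v'$ are both global maxima of the concave function $L\restriction_{\Delta_S}$, so $L$ is constant on the segment, and the same expression for $g''$ then forces $w_I=0$.
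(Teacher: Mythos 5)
Your proof is correct and follows essentially the same route as the paper's: the easy inclusion is verified identically (elements of $K$ preserve every hyperedge sum $v_I$, hence every $\tfrac{\partial L}{\partial v_i}$), and the reverse inclusion rests on the same concavity mechanism, which the paper phrases as ``$\Lambda_S$ is the set of global maxima of the concave $L\restriction_{\Delta_S}$, and strict concavity of $\log$ forces $w_I=v_I$.'' Your version merely makes that step explicit and self-contained by computing $g''(t)=-\tfrac{1}{N}\sum_{I\in E} w_I^2/(v+tw)_I^2$ along the segment and using $g'(0)=g'(1)=0$, which is a valid (and arguably cleaner) rendering of the same argument, as you yourself note in the closing remark.
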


In other words, $\Lambda_S$ is the part of the affine subspace $v+K$ that intersects $\Delta_S$.
Therefore, $\Lambda$ is contained in a finite union of translates of $K$.

\begin{proof}

Fix $v \in \Lambda_S$ and let $w \in \Delta_S\cap(v+K)$. Since $v-w \in K$,
we have $v_I=w_I$ for all $I\in E$, and so
$\frac{\partial L}{\partial v_i}(w) = \frac{\partial L}{\partial v_i}(v) = 0$ for every $i \in S$.
Therefore, $w \in \Lambda_S$. 

Conversely, let $w \in \Lambda_S$. As observed at the end of Section \ref{ss-relation-x-F},
$L\restriction_{\Delta_S}$ attains its global maxima in every point of $\Lambda_S$. 
Hence, we get $L(w)=L(v)$ and so
\begin{equation}\label{eq:log-equality}
\sum_{I \in E} \log v_{I} = \sum_{I \in E} \log w_{I}
    .
\end{equation}
Since the map $V\in \R^N_{+} \mapsto \sum_{I \in E} \log(V_{I}) $ is strictly concave,
equality~\eqref{eq:log-equality} implies that $v_{I} = w_{I} $ for all $I \in E$, and
so $v - w \in K$, which concludes the proof. 
\end{proof}

Now we analyze the dynamical type of equilibria.

\subsection{Unstable equilibria}

Let $S\subset[m]$, and fix $w\in\Lambda_S$. Consider the derivative $DF(w):T_w\Delta\to T_w\Delta$.
In coordinates $v_1,\ldots,v_m$, this linear transformation is represented by
$JF(w)=\left(\frac{\partial F_i}{\partial v_j}\right)_{i,j}$ with:
\begin{equation}\label{jacobian}
\frac{\partial F_i}{\partial v_j}=\left\{\begin{array}{ll}
w_i\dfrac{\partial ^2L}{\partial v_i\partial v_j}&\text{if }i\not=j,\\
&\\
\dfrac{\partial L}{\partial v_i}+w_i\dfrac{\partial ^2L}{\partial v_i^2} &\text{if }i= j.\\
\end{array}\right.
\end{equation}
Without loss of generality, assuming that $S=\{k+1,\ldots, m\}$, we have that 
\begin{align}\label{definition jacobian}
JF(w)=\left[
\begin{array}{cc}
A & 0\\
C & B \\
\end{array}
\right]
\end{align}
where $A$ is a $k\times k$ diagonal matrix with diagonal entries $a_{ii}=\frac{\partial L}{\partial v_i}(w)$, $i\in[k]$.
The spectrum of $JF(w)$ is equal to the union of the spectra of $A$ and $B$. Introducing the inner
product $(x,y)=\sum_{i=k+1}^m x_iy_i/w_i$, we have $(Bx,y) = \langle Dx, y\rangle$,
where $D$ is the Hessian matrix of $L$ restricted to the coordinates $v_{k+1},\ldots,v_m$ and 
$\langle \cdot,\cdot\rangle$ is the canonical inner product. Since $D$ is symmetric,
$B$ is self-adjoint. Since $L$ is concave, $B$ is negative semidefinite. 
Therefore, the eigenvalues of $B$ are real and nonpositive, and so
$JF(w)$ has a real positive eigenvalue if and only if $a_{ii}>0$ for some $i\in[k]$,
which justifies the following definition.

\medskip
\noindent
{\sc Unstable equilibrium:} We call $w\in \Lambda$ an {\em unstable equilibrium}
if there exists $i\in [m]$ such that $w_i=0$ and $\frac{\partial L}{\partial v_i}(w) > 0$.
If $w$ is not unstable, we call $w$ a {\em non-unstable equilibrium}.

\medskip
By definition, every equilibrium in the interior of $\Delta$ is non-unstable.
The next lemma, proved in Appendix \ref{sec: non-convergence}, is the version for hypergraphs of \cite[Lemma 5.2]{BBCL-15}.
We let ${\rm Lim}\{x(n)\}$ denote the limit set of $\{x(n):n\geq 0\}$.

\begin{lemma}\label{Lem: non-convergence}
Let $H$ be a finite hypergraph. If $v\in \Lambda$ is an unstable equilibrium, then
    \[
      \mathds{P}\left[{\rm Lim}\{x(n)\}\cap (v+K)\neq\emptyset\right]=0.
    \]
In particular, $ \mathds{P}\left[\lim x(n)=v\right]=0$.
\end{lemma}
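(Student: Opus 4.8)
The plan is to exploit the fact that $F$ has a strictly positive drift in the unstable direction uniformly over the whole set $\Lambda\cap(v+K)$, and then to run a Pemantle-type non-convergence argument. First I would record the key algebraic reduction. Let $i\in[m]$ be a witness to instability, so $v_i=0$ and $\lambda:=\frac{\partial L}{\partial v_i}(v)>0$. By Lemma \ref{lemma.equil.face}, every $w\in\Lambda\cap(v+K)$ satisfies $w_I=v_I$ for all $I\in E$, whence $\frac{\partial L}{\partial v_i}(w)=-1+\frac1N\sum_{I\ni i}\frac1{w_I}=\lambda>0$; since $F_i(w)=w_i\,\frac{\partial L}{\partial v_i}(w)$ and $w$ is an equilibrium, this forces $w_i=0$. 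Thus $\Lambda\cap(v+K)$ is a compact set contained in the hyperplane $\{x_i=0\}$ on which the drift coefficient in coordinate $i$ equals the fixed positive number $\lambda$. Because $L\{x(n)\}\subseteq\Lambda$ almost surely by Proposition \ref{prop: limit set}, we have $L\{x(n)\}\cap(v+K)\subseteq\Lambda\cap(v+K)\subseteq\{x_i=0\}$, and the lemma reduces to showing that $x(n)$ almost surely has no limit point in this compact set, i.e. that $x_i(n)$ cannot tend to $0$ along a subsequence while $x(n)$ approaches $v+K$.

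Next I would establish the two local estimates that power the escape. By continuity of $x\mapsto-1+\frac1N\sum_{I\ni i}\frac1{x_I}$ on $\Delta$, I fix $\lambda'\in(0,\lambda)$ and a neighborhood $U$ of $\Lambda\cap(v+K)$ in $\Delta$ on which this quantity stays $\geq\lambda'$; since $F_i(x)=x_i\bigl(-1+\frac1N\sum_{I\ni i}\frac1{x_I}\bigr)$, this yields the positive drift $F_i(x)\geq\lambda'\,x_i$ throughout $U$. For the noise, the $i$-th component of $u_n$ has conditional variance $\frac1{N^2}\operatorname{Var}\bigl(C_i(n+1)\mid\mathscr F_n\bigr)=\frac1{N^2}\sum_{I\ni i}\frac{x_i}{x_I}\bigl(1-\frac{x_i}{x_I}\bigr)$, because the $\delta_{I\to i}(n+1)$ are conditionally independent Bernoulli variables with means $x_i/x_I$. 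On $U$ the $x_I$ are bounded, so this conditional variance is $\asymp x_i(n)$. The decisive structural point is the matched scaling: the increment of $x_i$ has conditional mean of order $\gamma_n x_i$ but conditional standard deviation of order $\gamma_n\sqrt{x_i}$, so near $\{x_i=0\}$ the noise dominates the drift and systematically kicks $x_i$ away from $0$.

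Finally I would feed these estimates into the non-convergence machinery of \cite{Pemantle-92}, adapted as in \cite[Lemma 5.2]{BBCL-15}: on the event that $L\{x(n)\}$ meets $v+K$, the connected limit set furnishes a subsequence along which $x(n)$ approaches a point of $\Lambda\cap(v+K)$ and hence enters $U$ with $x_i(n)\to0$; comparing $x_i(n)$ (or $\log x_i(n)$) to a submartingale on the excursions inside $U$, the positive drift makes it grow in expectation while the bounded cumulative quadratic variation (controlled by $\sum\gamma_n^2<\infty$, as in Proposition \ref{prop: limit set}) confines the fluctuations, so with probability one $x_i$ is repelled past a fixed threshold infinitely often—contradicting $x_i(n)\to0$. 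The main obstacle is precisely that the exciting noise is \emph{not} uniformly nondegenerate, since it vanishes as $x_i\to0$, so the off-the-shelf instability theorems that demand a uniform lower bound on the noise in the unstable direction do not apply verbatim; the resolution is the matched scaling above (drift $\asymp x_i$, noise deviation $\asymp\sqrt{x_i}$), which is exactly the one-dimensional regime in which repulsion from $0$ still holds, together with the uniformity of the drift coefficient $\lambda$ over the compact continuum $\Lambda\cap(v+K)$, allowing a single neighborhood $U$ and constant $\lambda'$ to handle all of it at once. The final assertion is then immediate: since $0\in K$ we have $v\in v+K$, so $\lim x(n)=v$ would place $v$ in $L\{x(n)\}\cap(v+K)$, an event of probability zero.
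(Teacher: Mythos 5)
Your first two paragraphs are sound and match the paper's setup: since $w-v\in K$ forces $w_I=v_I$ for all $I\in E$, the coefficient $\frac{\partial L}{\partial v_1}$ is constantly equal to $\lambda>0$ on $v+K$, every equilibrium there has first coordinate $0$, and by connectedness of the limit set the event to exclude is that $x(n)$ eventually stays in a neighborhood of $v+K$ while $x_1(n)\to 0$; the computation of the conditional variance of $u_{n,1}$ as $\asymp x_1(n)$ is also correct. The gap is in the closing argument. You rightly note that the noise degenerates in the unstable direction, so the off-the-shelf Pemantle instability theorem does not apply, but the proposed fix does not close. For the comparison of $\log x_1$ with a submartingale, the per-step conditional quadratic variation is $\gamma_n^2\operatorname{Var}(u_{n,1}\mid\mathscr F_n)/x_1(n)^2\asymp\gamma_n^2/x_1(n)$, not $\gamma_n^2$; its sum is $\asymp\sum_n 1/(nB_1(n))$, which need not converge, so the fluctuations are \emph{not} ``controlled by $\sum\gamma_n^2<\infty$''. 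Worse, the second-order Taylor term in $\mathbb E[\log x_1(n+1)-\log x_1(n)\mid\mathscr F_n]$ is of order $-\gamma_n^2/x_1(n)\asymp-\gamma_n/B_1(n)$, which swamps the positive drift $\gamma_n\lambda'$ unless $B_1(n)$ exceeds a large constant. So the ``matched scaling'' by itself does not yield repulsion; a lower bound on $B_1(n)$ is indispensable, and your sketch never produces one.

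That lower bound is precisely the ingredient the paper supplies. It first proves $\mathbb P[\lim_n B_1(n)=\infty]=1$ by a Borel--Cantelli argument (at each step, vertex $1$ receives the ball from a fixed $I\in E^1$ with probability at least $\tfrac{1}{|I|(N_0+nN)}$, and these bounds sum to infinity), and then restricts to the event $\mathcal Y_n=\{x(k)\in\mathcal N,\ \forall k\geq n\}\cap\{B_1(n)\geq B\}$ with $B$ large. On that event, a coupling with independent Bernoulli variables together with a Chernoff bound over the window $[n,(1+\delta)n]$ gives $\mathbb E[\log x_1((1+\delta)n)\mid\mathcal G_n]\geq\log x_1(n)+\tfrac12\log(1+c)$, and iterating contradicts $\log x_1\leq 0$. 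Note also that the escape is drift-driven once $B_1(n)$ is large; your phrase ``the noise dominates the drift and systematically kicks $x_i$ away from $0$'' misidentifies the mechanism — here the noise is the obstacle to be controlled, not the engine of non-convergence. In short, you have located the right difficulty but not resolved it.
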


This formulation will be particularly important in the proof of Theorem \ref{thm-not-injective}.

\subsection{Non-unstable equilibria and Lyapunov functions}

The existence of non-unstable equilibria provides extra information on the behavior
of $F$. This is the content of the next lemma, which is a version 
for hypergraphs of \cite[Lemmas 3.1 and 3.2]{CL-14}.
Given $w \in \Delta_S$ and $\chi \in (0, \min_{i \in S} w_i]$, let
$\Delta^{w, \chi} = \{ v\in \mathbb{R}^m: v_i \geq \chi, \forall i \in S\}$.
Given $U\subset \Delta^{w, \chi}$, call $P:\Delta^{w, \chi}\to\R$
a {\em strict Lyapunov function for $U$} if it is strictly monotone along the integral curves of $F$
outside $U$.

\begin{lemma}\label{lemma-non-unstable}
Let $w$ be a non-unstable equilibrium. There exists a set $J = J(w, \chi)\subset w+K$
such that $P: \Delta^{w,\chi} \to \mathbb{R}$ given by $P(v) = \sum_{i=0}^m w_i \log v_i$
is a strict Lyapunov function for $J$.
\end{lemma}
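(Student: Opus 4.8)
The plan is to show that the candidate function $P(v) = \sum_{i=0}^m w_i \log v_i$ is strictly increasing along integral curves of $F$ except on a set $J \subset w+K$, and then invoke Proposition~\ref{prop: limit set} together with Lemma~\ref{Lem: non-convergence} to conclude that the limit set lies in $J$ almost surely. First I would compute the time derivative of $P$ along the flow. Using $\tfrac{dv_i}{dt} = F_i(v) = v_i\tfrac{\partial L}{\partial v_i}$ as established earlier, I get
\begin{equation*}
\frac{d}{dt}(P\circ v) = \sum_{i} \frac{w_i}{v_i}\,\frac{dv_i}{dt} = \sum_{i} w_i\,\frac{\partial L}{\partial v_i}(v) = \sum_i w_i\left(-1 + \frac{1}{N}\sum_{I\ni i}\frac{1}{v_I}\right).
\end{equation*}
Rearranging the double sum by swapping the order of summation over hyperedges, and using $\sum_i w_i = 0$ (since $w \in \Delta_S$ and the simplex constraint, or rather $w$ lies in an affine slice so that $\sum w_i$ is constant), I expect this to collapse to an expression of the form $\tfrac{1}{N}\sum_{I\in E}\tfrac{w_I}{v_I} - (\text{const})$, which by comparison with the equilibrium values $w_I$ can be analyzed via a convexity/Jensen-type inequality.

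The key step is to exploit the non-unstable hypothesis. Because $w$ is non-unstable, for every $i$ with $w_i=0$ we have $\tfrac{\partial L}{\partial v_i}(w)\le 0$, while on $S$ the first-order conditions give $\tfrac{\partial L}{\partial v_i}(w)=0$. I would use these sign conditions, combined with the strict concavity of the logarithm, to show that $\tfrac{d}{dt}(P\circ v)\ge 0$ on the relevant region $\Delta^{w,\chi}$, with equality precisely when $v_I = w_I$ for all $I\in E$, i.e.\ exactly when $v \in w+K$ by the characterization in Lemma~\ref{lemma.equil.face}. Setting $J = J(w,\chi)$ to be the subset of $w+K$ where this equality holds (intersected with $\Delta^{w,\chi}$) then makes $P$ strictly monotone off $J$, which is the definition of a Lyapunov function for $J$.

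For the final assertion, I would argue as follows. The general limit-set theorem (Proposition~\ref{prop: limit set}) already places $L\{x(n)\}$ inside the connected equilibria set $\Lambda$ almost surely. Since $\Lambda$ is contained in a finite union of translates of $K$ (the remark after Lemma~\ref{lemma.equil.face}), and $L\{x(n)\}$ is connected, it must lie within a single translate $w+K$ for some equilibrium $w$. If the relevant $w$ were unstable, Lemma~\ref{Lem: non-convergence} would force $\mathds{P}[L\{x(n)\}\cap(w+K)\ne\emptyset]=0$, so almost surely the limit set meets only non-unstable translates. For such $w$, the strict monotonicity of $P$ together with the standard fact that the limit set of a stochastic approximation must be contained in the set where the Lyapunov function is stationary yields $L\{x(n)\}\subset J$ almost surely.

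The main obstacle I anticipate is the monotonicity computation itself: establishing that $\tfrac{d}{dt}(P\circ v)\ge 0$ with the correct equality case requires carefully combining the first-order stationarity on $S$, the sign conditions from non-instability on the complement of $S$, and a strict-concavity argument for the logarithm to pin down that equality forces $v_I=w_I$ for all $I$. Handling the boundary behavior — ensuring the flow stays in the region $\Delta^{w,\chi}$ where $P$ is well-defined and the coordinates in $S$ stay bounded away from zero — is the delicate point, and is precisely why the parameter $\chi$ and the localized domain $\Delta^{w,\chi}$ are introduced.
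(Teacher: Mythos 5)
Your proposal follows essentially the same route as the paper: compute $\tfrac{d}{dt}(P\circ v) = -1 + \tfrac{1}{N}\sum_{I\in E} \tfrac{w_I}{v_I}$, use convexity of this expression in $v$ together with the non-unstable sign conditions ($\tfrac{\partial L}{\partial v_i}(w)=0$ on $S$, $\le 0$ off $S$) to show it is nonnegative with equality exactly on $J = \Delta^{w,\chi}\cap(w+K)$, the equality case coming from strict convexity of $x\mapsto 1/x$ (equivalently, strict concavity of $\log$). One small correction: $\sum_i w_i = 1$, not $0$, since $w$ lies in the simplex — this is what produces the constant $-1$ in the derivative and makes $f(w)=0$ rather than $f$ being everywhere positive.
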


\begin{proof}
Inside $\Delta^{w,\chi}$ the function $P$ is differentiable, with
$$
\frac{d}{dt}(P\circ v)=\sum_{i\in S}w_i\frac{1}{v_i}\frac{dv_i}{dt}=\sum_{i\in S}w_i\frac{\partial L}{\partial v_i}
=\sum_{i=1}^m w_i\frac{\partial L}{\partial v_i}=-1+\frac{1}{N}\sum_{I\in E}\frac{w_I}{v_I}\,\cdot
$$

Let $f:\Delta^{w,\chi}\to\mathbb R$ be given by $f(v)=-1+\frac{1}{N}\sum\limits_{I\in E}\frac{w_I}{v_I}$.
Observe that $f(w)=0$. We will show that $f(v)\geq 0$, with equality if and only if $v\in J$ (to be defined below).

\medskip
\noindent
{\sc Step 1:} $f$ is convex.

\medskip
Since $x>0\mapsto \frac{1}{x}$ is convex, each $v\in\Delta^{w,\chi}\mapsto \frac{w_I}{v_I}$ is convex,
thus $f$ is the sum of convex functions.

\medskip
\noindent
{\sc Step 2:} $w$ is a global minimum of $f$.

\medskip
Since $f$ is convex, it is enough to prove that $w$ is a local minimum of $f$.
Let $v=w+(\ve_1,\ldots,\ve_m)$ with $\ve_1,\ldots,\ve_m$ small enough.  Of course, $\ve_i\geq 0$ for $i\not\in S$.
Applying the inequality $\frac{x}{x+\ve}-1\geq -\frac{\ve}{x}$ for $x,x+\ve>0$, we have
\begin{align*}
&\ f(v)-f(w)=\frac{1}{N}\sum_{I\in E}\left[\frac{w_I}{v_I}-1\right]
\geq\frac{1}{N}\sum_{I\in E}-\frac{\ve_I}{w_I}=-\sum_{i=1}^m \ve_i\frac{1}{N}\sum_{I\ni i}\frac{1}{w_I}\\
&=-\sum_{i=1}^m \ve_i\left[1+\frac{\partial L}{\partial v_i}(w)\right]
=-\sum_{i=1}^m \ve_i \frac{\partial L}{\partial v_i}(w)\geq 0,
\end{align*}
since $\ve_i\frac{\partial L}{\partial v_i}(w)=0$ for $i\in S$, and $\ve_i\frac{\partial L}{\partial v_i}(w)\leq 0$
for $i\not\in S$. Hence $w$ is a local minimum of $f$.

\medskip
\noindent
{\sc Step 3:} The set of global minima of $f$ is $J=J(w,\chi):=\Delta^{w,\chi}\cap (w+K)$.

\medskip
The set of global minima of a convex function is convex. Thus if $v\in\Delta^{w,\chi}$ with $f(v)=f(w)$,
then $f(tv+(1-t)w)=tf(v)+(1-t)f(w)$ for all $t\in[0,1]$. Because $x>0\mapsto\frac{1}{x}$ is strictly convex,
we get $v_I=w_I$ for all $I\in E$, and so $v-w\in K$. This shows that the set of global minima of $f$ is
contained in $J$. Conversely, if $v\in J$, then $v_I=w_I$ for all $I\in E$, and so $f(v)=0$, which proves 
the reverse inclusion.   
\end{proof}

\section{Proof of Theorems \ref{Thm-injective} and \ref{thm-not-injective}}

In this section, we prove the main theorems.

\subsection{Proof of Theorem \ref{Thm-injective}}
We start observing that, since $K=\{0\}$, each $\Lambda_S$ is either empty or a singleton. 
By Theorem \ref{thm:limit set theorem}, we conclude that $\lim x(n)\in \Lambda$ is a singleton almost surely. 
By Lemma \ref{Lem: non-convergence}, there is at least one non-unstable equilibrium
$w\in\Lambda$. By Lemma \ref{lemma-non-unstable}, there is at most one non-unstable equilibrium. 
Therefore, $w$ is the only non-unstable equilibrium, and so $\lim x(n)=w$
almost surely.

\subsection{Proof of Theorem \ref{thm-not-injective}}

If $K=\{0\}$, then Theorem \ref{Thm-injective} provides the stronger result, 
hence we assume that $K\neq\{0\}$.
By Lemma \ref{Lem: non-convergence}, there is at least one non-unstable equilibrium
$w\in\Lambda$. 

\medskip
\noindent
{\sc The set $\mathfs J$:} We define $\mathfs J=\Delta^{m-1}\cap(w+K)$.

\medskip
Clearly, $\mathfs J\subset \Lambda$ and every $v\in \mathfs J$ is non-unstable.
We claim that $\mathfs J$ is the set of non-unstable equilibria.
By contradiction, assume that there is $v\in \Lambda\backslash \mathfs J$
non-unstable. Then $(v+K)\cap (w+K)=\emptyset$. However, by Lemma~\ref{lemma-non-unstable},
since $\Delta^{v,\chi} \cap \Delta^{w, \chi} \neq \emptyset$ for sufficiently small $\chi > 0$, 
the orbit by $F$ of a point at $\Delta^{v,\chi} \cap \Delta^{w, \chi}$ converges to 
both $J(v, \chi)$ and $J(w, \chi)$. In particular, it implies $(v + K) \cap (w +K) \neq \emptyset$, 
a contradiction. Thus $\mathfs J$ is the set of non-unstable equilibria. 
By Lemma~\ref{Lem: non-convergence}, ${\rm Lim}\left\{ x(n)\right\}  $
does not intersect the set of unstable equilibrium almost surely, 
and therefore ${\rm Lim}\{x(n)\}$ is contained in $\mathfs J$ almost surely.
This proves the first part of Theorem \ref{thm-not-injective}.

Now we prove the second part, so we restrict ourselves to the event that ${\rm Lim}\{x(n)\}\subset\Delta_{[m]}$. 
If this event has zero probability, there is nothing to prove. If it has positive probability, 
then $\Lambda_{[m]}\neq \emptyset$, so we can take $w$ above belonging
to $\Lambda_{[m]}$. In particular, $\Lambda_{[m]}\subset\mathfs J$. We will show that
$x(n)$ converges to a point of $\Lambda_{[m]}$ almost surely.
We begin calculating the dynamical nature of $F$ transversely to $\Lambda_{[m]}$.
Let $k={\rm dim}(K)$. Given $v\in\Lambda_{[m]}$, recall that $DF(v):T_v\Delta\to T_v\Delta$ 
has non-positive eigenvalues, due to the concavity of the Lyapunov function $L$. 

\begin{lemma}\label{lemma-neg-eig}
  For every $v\in \Lambda_{[m]}$, the derivative $DF(v)$ has
$m-1-k$ negative eigenvalues, and 0 is an eigenvalue with multiplicity $k$.
\end{lemma}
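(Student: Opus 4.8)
The plan is to show that the operator $DF(v)\restriction_\Gamma$ is diagonalizable over $\R$ with non-positive eigenvalues, and that its kernel is precisely $K$; the eigenvalue count then follows at once, since $\dim\Gamma=m-1$.

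First I would use that $v\in\Lambda_{[m]}$ lies in $\mathrm{int}(\Delta)$, so that $\frac{\partial L}{\partial v_i}(v)=0$ for all $i$. Substituting this into \eqref{jacobian} removes the diagonal term $\frac{\partial L}{\partial v_i}$, leaving
$$
JF(v)=\diag(v_1,\dots,v_m)\,M,\qquad M:=\Big(\tfrac{\partial^2L}{\partial v_i\partial v_j}(v)\Big)_{i,j},
$$
where $M$ is the Hessian of $L$ at $v$. Specializing the discussion following \eqref{definition jacobian} to $S=[m]$ (so that the block $A$ disappears and $B=JF(v)$), the operator $JF(v)$ is self-adjoint with respect to the inner product $(x,y)=\sum_{i=1}^m x_iy_i/v_i$, because $M$ is symmetric, and negative semidefinite, because $L$ is concave. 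Hence $DF(v)\restriction_\Gamma$ is diagonalizable over $\R$ with eigenvalues in $(-\infty,0]$, and it only remains to compute $\dim\ker\big(DF(v)\restriction_\Gamma\big)$, which equals the multiplicity of the eigenvalue $0$.

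To identify the kernel I would differentiate \eqref{partial-derivative}, giving $M_{ij}=-\frac1N\sum_{I\ni i,\,I\ni j}v_I^{-2}$, which factors as
$$
M=-\tfrac1N\,I(H)^{\top}W\,I(H),\qquad W:=\diag\big(v_I^{-2}\big)_{I\in E}.
$$
Since $W$ is positive definite (all $v_I>0$), we get $\ker M=\ker I(H)$; and since $\diag(v_1,\dots,v_m)$ is invertible, $JF(v)x=0$ is equivalent to $Mx=0$. Restricting to $\Gamma$,
$$
\ker\big(DF(v)\restriction_\Gamma\big)=\Gamma\cap\ker I(H)=\ker\big(I(H)\restriction_\Gamma\big)=K,
$$
which has dimension $k$. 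Consequently $0$ is an eigenvalue of multiplicity $k$, and as $DF(v)\restriction_\Gamma$ is diagonalizable with non-positive eigenvalues on the $(m-1)$--dimensional space $\Gamma$, the remaining $m-1-k$ eigenvalues are strictly negative.

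I expect the only delicate point to be the identity $\ker M=\ker I(H)$: it hinges on the factorization $M=-\frac1N I(H)^{\top}W\,I(H)$ together with the positive-definiteness of $W$, and it is exactly here that the incidence matrix $I(H)$ — rather than the adjacency matrix — governs the multiplicity of $0$ through the subspace $K$. A secondary point worth stating explicitly is that self-adjointness forces algebraic and geometric multiplicities to coincide, so that the dimension of the $0$-eigenspace indeed equals the multiplicity of $0$ as an eigenvalue.
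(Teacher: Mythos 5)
Your proposal is correct and follows essentially the same route as the paper: at an interior equilibrium the Jacobian reduces to $\diag(v_1,\dots,v_m)\,{\rm Hess}(L)(v)$, the Hessian factors through the incidence matrix (the paper writes $B=-\frac{1}{N}E^tE$ with $E$ the row-rescaled incidence matrix, which is your $-\frac1N I(H)^{\top}WI(H)$), and self-adjointness with respect to the weighted inner product together with concavity of $L$ yields the eigenvalue count. Your explicit remark that self-adjointness equates algebraic and geometric multiplicities is a point the paper leaves implicit, but the argument is the same.
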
  

\begin{proof}
 Consider the jacobian matrix $JF(v)$.
We claim that ${\rm ker}(JF(v))={\rm ker}(I(H))$. This will conclude the proof, since 
it implies that ${\rm ker}(DF(v))=K$, which has dimension $k$ and 
$DF(v)$ has non-positive eigenvalues. 
The vector field $F$ is zero on $v+{\rm ker}(I(H))$,
thus ${\rm ker}(I(H))\subset {\rm ker}(JF(v))$, so it is enough 
to prove that $I(H)$ and $JF(v)$ have the same rank.

Since $v\in \Delta_{[m]}$, we have $\frac{\partial L}{\partial v_i}(v)=0$ for all $i\in[m]$.
From equation (\ref{jacobian}), $JF(v)=\left(v_i \frac{\partial^2 L}{\partial v_i\partial v_j}(v)\right)_{i,j}$.
Letting $B={\rm Hess}(L(v))$, this implies that $JF(v)=DB$ where $D$ is an invertible diagonal matrix with entries $v_1,\ldots,v_m$, thus ${\rm rank}(JF(v))={\rm rank}(DB)={\rm rank}(B)$.
We now relate $B$ with $I(H)$. By equality (\ref{partial-derivative}),
$$
\frac{\partial L}{\partial v_i}=-1+\frac{1}{N}\sum_{I \ni i}\frac{1}{v_I} \ \ \Longrightarrow \ \ 
\frac{\partial^2 L}{\partial v_i\partial v_j}=-\frac{1}{N}\sum_{I \ni i, j}\frac{1}{(v_I)^2}\,\cdot
$$
Let $E$ be the matrix $I(H)$ with the line relative to the hyperedge $I$ multiplied by $\tfrac{1}{v_I}$.
Then ${\rm rank}(E)={\rm rank}(I(H))$ and the $(i,j)$--th entry of $E^tE$ is exactly
$\sum_{I \ni i, j}\frac{1}{(v_I)^2}$. Therefore $B=-\frac{1}{N}E^tE$, so that
${\rm rank}(B)={\rm rank}(E^tE)={\rm rank}(E)={\rm rank}(I(H))$. The conclusion is that
${\rm rank}(JF(v))={\rm rank}(I(H))$, which proves the lemma.
\end{proof}

\begin{remark}
We take the chance to use Lemma \ref{lemma-neg-eig} to correct a mistake in \cite{Lim-16},
where one of the authors claims that, for Pólya urns on graphs, a result similar to Lemma
\ref{lemma-neg-eig} holds simply because the restriction $L\restriction_{\Delta_{[m]}}$ is concave and 
$\Lambda_{[m]}$ is the set of global maxima of this restriction. These conditions are not enough 
to ensure that no other zero eigenvalue appears. 
\end{remark}

We are now in position to prove that, conditioned on the event that ${\rm Lim}\{x(n)\}\subset\Delta_{[m]}$,
 $x(n)$ converges to a point of $\Lambda_{[m]}$ almost surely.
The idea is to use the arguments of \cite{CL-14}, which we will recast the main ideas.
First, we introduce some notation.
Let $\{\Phi_t\}_{t\geq 0}$ be the semiflow induced by $F$ and
$\tau_n=\sum_{i=0}^n\gamma_i$. Let $\{X(t)\}_{t\geq 0}$ be the interpolation of $\{x(n)\}_{n\geq 0}$,
defined by $X(\tau_n)=x(n)$ and $X\restriction_{[\tau_n,\tau_{n+1}]}$ linear,
and let $d$ be the euclidean distance on $\Delta^{m-1}$.

\begin{theorem}[\cite{Benaim-99}]\label{theorem-random-versus-deterministic}
Almost surely, the interpolated process satisfies
$$
\sup_{T>0}\limsup_{t\to+\infty}\frac{1}{t}\log\left(\sup_{0\leq h\leq T}d\big(X(t+h),\Phi_h(X(t))\big)\right)
\leq -\frac{1}{2}\,\cdot
$$
\end{theorem}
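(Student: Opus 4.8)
The plan is to decompose the tracking error $d\big(X(t+h),\Phi_h(X(t))\big)$ into a deterministic discretization part and a stochastic noise part, reduce the whole expression by a Gronwall estimate to the size of the martingale increments $M_k-M_n$ over the relevant time window, and then control those increments by a concentration inequality together with Borel--Cantelli. The rate $-\tfrac12$ will emerge from the precise interplay between the logarithmic growth of $\tau_n$ and the square-summability of the $\gamma_n$.

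First I would fix $T>0$ and, for $t=\tau_n$, compare the piecewise-linear interpolation $X$ with the flow line $s\mapsto\Phi_{s}(X(t))$. Writing $X(\tau_k)=x(n)+\sum_{i=n}^{k-1}\gamma_iF(x(i))+(M_k-M_n)$ and $\Phi_h(X(t))=X(t)+\int_0^hF(\Phi_s(X(t)))\,ds$, subtracting and using that $F$ is Lipschitz, Gronwall's inequality gives
$$
d\big(X(t+h),\Phi_h(X(t))\big)\le \Big(\sup_{\{k:\,0\le\tau_k-\tau_n\le T\}}\|M_k-M_n\|+O(\gamma_n)\Big)\,e^{\Lip{F}\,T},
$$
where the $O(\gamma_n)$ collects the Riemann-sum and piecewise-linear errors, which are negligible on the exponential scale. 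Thus the statement reduces to proving
$$
\limsup_{n\to\infty}\frac{1}{\tau_n}\log\Big(\sup_{\{k:\,0\le\tau_k-\tau_n\le T\}}\|M_k-M_n\|\Big)\le-\frac12
$$
almost surely, the factor $e^{\Lip{F}T}$ being harmless since $T$ is fixed before the $\limsup$ is taken and the bound $-\tfrac12$ is then uniform in $T$.

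Next I would estimate the martingale increments. Since $|u_n|\le1$, the increments of $M$ are bounded by $\gamma_n$, and over any window with $\tau_k-\tau_n\le T$ one has $\sum_{i=n}^{k-1}\gamma_i^2\le\sigma_n^2$ with $\sigma_n^2=O(e^{-\tau_n})$, because $\gamma_i\sim 1/i$ forces $\tau_n\sim\log n$ and $\sum_{i\ge n}\gamma_i^2\sim 1/n\sim e^{-\tau_n}$. An Azuma--Hoeffding (or Burkholder) maximal inequality applied to the bounded-increment martingale $k\mapsto M_k-M_n$ then gives, for each $\lambda>0$,
$$
\P{\sup_{\{k:\,0\le\tau_k-\tau_n\le T\}}\|M_k-M_n\|\ge\lambda}\le C\exp\!\Big(-\tfrac{\lambda^2}{2\sigma_n^2}\Big),
$$
with $C$ depending only on $m$. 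Choosing $\lambda=\lambda_n=e^{-(1/2-\varepsilon)\tau_n}$ makes the exponent of order $-\tfrac12\,e^{2\varepsilon\tau_n}\to-\infty$, so the probabilities are summable along the sequence $\tau_n$; Borel--Cantelli yields $\limsup\le-(\tfrac12-\varepsilon)$ for every $\varepsilon>0$, hence $\le-\tfrac12$.

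The main obstacle I anticipate is twofold. First, passing from the discrete instants $t=\tau_n$ to the continuum $t\to+\infty$ in the outer supremum requires a discretization of the time axis together with a union bound fine enough not to degrade the rate; here the Lipschitz control on $F$ shows that moving $t$ within one interval $[\tau_n,\tau_{n+1}]$ perturbs everything by $O(\gamma_n)$, so the mesh of the $\tau_n$'s suffices. Second, making the maximal inequality genuinely uniform over the inner window $\{k:0\le\tau_k-\tau_n\le T\}$ is where the bounded-increment structure and the square-summability $\sum\gamma_i^2<\infty$ (already exploited in Proposition \ref{prop: limit set}) are essential. Keeping track of constants so that the window variance $\sigma_n^2$ really behaves like $e^{-\tau_n}$ is the quantitative heart of the argument, and is precisely what produces the exponent $-\tfrac12$.
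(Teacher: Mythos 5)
Your proposal is correct and follows essentially the same route as the result it reproves: the paper does not prove Theorem \ref{theorem-random-versus-deterministic} but simply cites \cite[Prop.~8.3]{Benaim-99} together with the computation of the rate $\tfrac12\limsup\frac{\log\gamma_n}{\tau_n}=-\tfrac12$, and your Gronwall--plus--exponential-martingale--plus--Borel--Cantelli argument is precisely the standard proof behind that citation, correctly specialized to $\gamma_n\sim 1/n$ (so that $\sum_{i\ge n}\gamma_i^2=O(e^{-\tau_n})$). The only points to keep explicit in a full write-up are that the Azuma maximal bound is applied componentwise to the $\R^m$-valued martingale and that the process eventually lies in $\Delta$, where $F$ is Lipschitz; neither affects the rate.
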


\medskip
For Pólya urns on graphs, this result is stated in \cite[Lemma 4.1]{CL-14}.
The proof for hypergraphs is the same, following from shadowing techniques that relate the
rate of convergence of the interpolated process and the vector field, see \cite[Prop. 8.3]{Benaim-99}.
The right-hand side of the inequality is the log-convergence rate
$\frac{1}{2}\limsup\frac{\log\gamma_n}{\tau_n}=-\frac{1}{2}$.

We wish to show that $x(n)$ converges to a point of $\Lambda_{[m]}$ almost surely.
It is enough to prove that the interpolated process $X(t)$ satisfies this property.
Consider a foliation $\{\mathfs F_x\}_{x\in \Lambda_{[m]}}$ such that:
\begin{enumerate}[$\circ$]
\item $\mathfs F_x$ is a submanifold with $\mathfs F_x \pitchfork \Lambda_{[m]}$ at the single point $x$.
\item $x$ is a hyperbolic attractor for $F\restriction_{\mathfs F_x}$.
The speed of convergence depends on the negative eigenvalues of $DF(x)$.
\end{enumerate}
The existence of this foliation follows from the theory of invariant manifolds for normally hyperbolic sets,
see e.g. \cite[Theorem 4.1]{hirsch1977invariant}.
The leaves $\mathfs F_x$ depend smoothly on $x$. We let $\pi$ be the projection map
such that $\pi\restriction_{\mathfs F_x}\equiv x$.

Fix one realization of the process $x(n)$ whose limit set does not intersect $\partial\Delta^{m-1}$
(recall we conditioned on this event, assuming it has positive probability). Then the limit set of $X(t)$
also does not intersect $\partial\Delta^{m-1}$, and so $X(t)$ has an accumulation point $y\in \Lambda_{[m]}$.
Fix a compact neighborhood $V\subset \Lambda_{[m]}$ of $y$, and let
$U\subset\Delta$ be a neighborhood of $V$. Taking $U$ small enough,
the projection $\pi$ is 2--Lipschitz: 
\begin{equation}\label{pi-lipschitz}
d(\pi(x),\pi(y))\leq 2d(x,y),\forall\,x,y\in U.
\end{equation}
Fix a small parameter $\varepsilon>0$ and reduce $U$, if needed, so that
\begin{equation}\label{definition-neighborhood}
U=\{x\in\Delta:\pi(x)\in V\text{ and }d(x,\pi(x))<\varepsilon\}.
\end{equation}
Let $a=\max\{\lambda:\lambda\not=0\text{ is eigenvalue of }DF(x), x\in V\}$.
By Lemma \ref{lemma-neg-eig}, we have $a<0$, thus there is $C>0$ such that
$$
d(\Phi_t(x),\pi(x))\le C e^{at}d(x,\pi(x)), \forall\,x\in U,\forall\,t\ge 0.
$$

\begin{lemma}\label{lemma-iteration}
Assume that $X(t)\in U$. If $t,T$ are large enough, then
\begin{enumerate}[i)]
\item[{\rm (i)}] $d(\pi(X(t+T)),\pi(X(t)))<2e^{-\frac{t}{4}}$.
\item[{\rm (ii)}] $X(t+T)\in U$.
\end{enumerate}
\end{lemma}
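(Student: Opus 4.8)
The plan is to combine four ingredients already at hand: the $F$--invariance of the foliation leaves, the Lipschitz control \eqref{pi-lipschitz} on $\pi$, the contraction estimate $d(\Phi_t(x),\pi(x))\le Ce^{at}d(x,\pi(x))$, and the shadowing bound of Theorem~\ref{theorem-random-versus-deterministic}. From the last one, for each \emph{fixed} $T$ one has $\limsup_{t\to\infty}\tfrac1t\log\bigl(\sup_{0\le h\le T}d(X(t+h),\Phi_h(X(t)))\bigr)\le-\tfrac12<-\tfrac14$, so for $t$ large (depending on $T$) the pointwise estimate $d(X(t+T),\Phi_T(X(t)))<e^{-t/4}$ holds. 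The structural fact I would use is that each leaf $\mathfs F_x$ is invariant under the semiflow with $x$ as attractor, so that $\pi(\Phi_h(z))=\pi(z)$ for $z\in U$ and $h\ge0$; in particular $\pi(\Phi_T(X(t)))=\pi(X(t))$.

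To prove (i), I would rewrite, using this invariance, $d(\pi(X(t+T)),\pi(X(t)))=d\bigl(\pi(X(t+T)),\pi(\Phi_T(X(t)))\bigr)$ and then apply \eqref{pi-lipschitz} to bound it by $2\,d(X(t+T),\Phi_T(X(t)))<2e^{-t/4}$ for $t$ large, which is exactly the assertion.

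For (ii), I would first fix $T$ so large that $Ce^{aT}<\tfrac12$, which is possible since $a<0$ by Lemma~\ref{lemma-neg-eig}. As $X(t)\in U$ forces $d(X(t),\pi(X(t)))<\varepsilon$, the contraction estimate gives $d(\Phi_T(X(t)),\pi(X(t)))\le Ce^{aT}\varepsilon<\varepsilon/2$. Estimating the transverse distance of $X(t+T)$ through the point $\pi(X(t))\in\Lambda_{[m]}$,
\[
d(X(t+T),\pi(X(t+T)))\le d(X(t+T),\pi(X(t)))+d(\pi(X(t)),\pi(X(t+T))),
\]
I would bound the first summand by $d(X(t+T),\Phi_T(X(t)))+d(\Phi_T(X(t)),\pi(X(t)))<e^{-t/4}+\varepsilon/2$ and the second by $2e^{-t/4}$ via part (i), obtaining $d(X(t+T),\pi(X(t+T)))<\varepsilon/2+3e^{-t/4}<\varepsilon$ once $t$ is large; this is the transverse condition in \eqref{definition-neighborhood}. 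For the base-point condition $\pi(X(t+T))\in V$, part (i) shows the projection is displaced by at most $2e^{-t/4}$, which tends to $0$, so it remains in $V$ for $t$ large.

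The main obstacle is getting the quantifier order right: $T$ must be fixed first to make the contraction factor $Ce^{aT}$ small, and only afterwards should $t\to\infty$, so that the shadowing error $e^{-t/4}$ is negligible against $\varepsilon$. The subtlest point is the containment $\pi(X(t+T))\in V$, since the base point is free to drift along $\Lambda_{[m]}$; a single step moves it by $2e^{-t/4}$, and when the lemma is iterated the total drift is dominated by the convergent series $\sum_{k}2e^{-(t+kT)/4}$, so the base point stays trapped in $V$ provided the process first enters $U$ close enough to the accumulation point $y\in{\rm int}(V)$.
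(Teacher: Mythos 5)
Your argument is correct and is essentially the proof the paper intends: the paper simply cites \cite[Lemma 4.4]{CL-14} together with Theorem~\ref{theorem-random-versus-deterministic} and estimates (\ref{pi-lipschitz})--(\ref{definition-neighborhood}), and your write-up fills in exactly that combination (shadowing bound, leaf invariance $\pi\circ\Phi_h=\pi$, Lipschitzness of $\pi$, and the transverse contraction with $T$ fixed before $t\to\infty$). You also correctly flag the only delicate point, namely that keeping the base point inside $V$ requires the summability of the drift $\sum_k 2e^{-(t+kT)/4}$ over iterations, which is how the cited proof handles it as well.
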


\begin{proof}
Same as \cite[Lemma 4.4]{CL-14}, using Theorem \ref{theorem-random-versus-deterministic}
and estimates (\ref{pi-lipschitz}), (\ref{definition-neighborhood}).
\end{proof}

\medskip
Now, we repeat ipsis literis the arguments of \cite{CL-14}, see also \cite{Lim-16},
which consists of:
\begin{enumerate}[$\circ$]
\item Fix large $t,T$, and let $X_k=X(t+kT)$, $k\geq 0$. Applying Lemma \ref{lemma-iteration},
prove that $\pi(X_k)$ converges, say $\lim \pi(X_k)=\widetilde x\in V$.
\item Prove that  $\lim d(X_k,\pi(X_k))=0$, hence $\lim X_k=\widetilde x$.
\item Prove that $X(t)$ also converges to $\widetilde x$.
\end{enumerate}
Therefore $x(n)$ converges to $\widetilde x$. Since $V\subset\Lambda_{[m]}$,
the proof of Theorem \ref{thm-not-injective} is complete.

\section{Concluding remarks}\label{ss-concluding}

Recall from Section \ref{ss-examples} that, for the platonic solids, the kernel of $I(H)$ coincides
with $K$, and their dimensions are:
\begin{center}
\begin{tabular}{ |l||c| } 
\hline
Platonic solid & \hspace{.6cm} $\dim K$ \hspace{.6cm} \\
\hline \hline
Tetrahedron & $0$ \\ \hline
Cube &  4 \\ \hline
Octahedron & 2 \\ \hline
Icosahedron & $0$ \\ \hline
Dodecahedron & 8 \\ \hline
\end{tabular}
\end{center}
In all cases, the uniform measure $\left(\tfrac{1}{m},\ldots,\tfrac{1}{m}\right)$ 
is a {\em non-unstable} equilibrium.

\medskip
We are currently not able to prove the Conjecture~\ref{conj:hypergraph-convergence} due to the
behavior of $DF$ at $\partial\Delta^{m-1}$. As shown in Lemma \ref{lemma-neg-eig}, in the interior of the simplex
the eigenvalues associated to directions transverse to $K$ are negative, but
as we approach $\partial\Delta^{m-1}$ these eigenvalues can approach zero. Even worse:
in $\partial\Delta^{m-1}$ the rank of $DF$ can decrease, thus creating zero eigenvalues 
in directions transverse to $K$.
For example, consider the Pólya urn in the cube, with the following enumeration of vertices:

\begin{figure}[!h]
 \centering
 \begin{tikzpicture}[cube/.style={thick,black}, cube hidden/.style={thick,dashed}]
	\draw[cube, line join=round] (2,0,0) -- (2,2,0) -- (2,2,2) -- (2,0,2) -- cycle;
  \draw[cube, line join=round] (0,2,0) node[anchor=south]{4} -- (2,2,0) node[anchor=west]{3}-- (2,2,2) -- (0,2,2)  -- cycle;
	\draw[cube, line join=round] (0,0,2) node[anchor=east]{5} -- (0,2,2) node[anchor=east]{1} -- (2,2,2)  node[anchor=west]{2} -- (2,0,2) node[anchor=west]{6}-- cycle;
	\draw[cube hidden, line join=round] (0,0,0) -- (2,0,0) node[anchor=west]{7}; 
	\draw[cube hidden, line join=round] (0,0,0) node[anchor=north]{8} -- (0,2,0);
	\draw[cube hidden,line join=round] (0,0,0) -- (0,0,2);
\end{tikzpicture}
\end{figure}

We have $\mathfs J=\Delta^7\cap (w+K)$, where $w=\left(\tfrac{1}{8},\ldots,\tfrac{1}{8}\right)$
and $K$ has dimension 4. On $w+K$, we have $v_I=\tfrac{1}{2}$ for all $I\in E$, hence
$$
\tfrac{\partial^2L}{\partial v_i\partial v_j}=-\tfrac{1}{6}\tfrac{1}{\left(\tfrac{1}{2}\right)^2}\#\{I\in E:i,j\in I\}=
-\tfrac{2}{3}\#\{I\in E:i,j\in I\}\,\cdot
$$
The set $\mathfs J$ has points with different
behavior for $F$:
\begin{enumerate}[$\circ$]
\item For $v=\left(0,\tfrac{1}{4},0,\tfrac{1}{4},\tfrac{1}{8},\tfrac{1}{8},\tfrac{1}{8},\tfrac{1}{8}\right)$, the matrix
$$
JF(v)=-\tfrac{1}{12}
\begin{bmatrix}
0 & 0 & 0 & 0 & 0 & 0 & 0 & 0 \\
4 & 6 & 4 & 2 & 2 & 4 & 2 & 0 \\
0 & 0 & 0 & 0 & 0 & 0 & 0 & 0 \\
4 & 2 & 4 & 6 & 2 & 0 & 2 & 4 \\
2 & 1 & 0 & 1 & 3 & 2 & 1 & 2 \\
1 & 2 & 1 & 0 & 2 & 3 & 2 & 1 \\ 
0 & 1 & 2 & 1 & 1 & 2 & 3 & 2 \\
1 & 0 & 1 & 2 & 2 & 1 & 2 & 3 \\
\end{bmatrix}
$$
has rank 4, hence no new eigenvectors of 0 are created. 
\item For $v=\left(\tfrac{1}{4},0,0,\tfrac{1}{4},0,\tfrac{1}{4},\tfrac{1}{4},0\right)$, the matrix
$$
JF(v)=-\tfrac{1}{6}
\begin{bmatrix}
3 & 2 & 1 & 2 & 2 & 1 & 0 & 1 \\
0 & 0 & 0 & 0 & 0 & 0 & 0 & 0 \\
0 & 0 & 0 & 0 & 0 & 0 & 0 & 0 \\
2 & 1 & 2 & 3 & 1 & 0 & 1 & 2 \\
0 & 0 & 0 & 0 & 0 & 0 & 0 & 0 \\
1 & 2 & 1 & 0 & 2 & 3 & 2 & 1 \\
0 & 1 & 2 & 1 & 1 & 2 & 3 & 2 \\
0 & 0 & 0 & 0 & 0 & 0 & 0 & 0 \\
\end{bmatrix}
$$
has rank 3, and five eigenvectors of 0 in $\Gamma$. Hence,
a new eigenvalue 0 was created.
\end{enumerate}
We believe that in this case, the limiting distribution
is fully supported on $\mathfs J$ and depends on the initial condition $x(0)$.
See the simulations in Figure \ref{simulations}, each consisting of 10000 iterations. 
The resulting points $\{(n,x(n)):0\leq n\leq 10000\}$ are plotted as a graph.
Notice that, as $B_1(0)$ grows,
$\lim x_1(n)$ tends to dominate the other limits. 
\begin{figure}
\includegraphics[scale=0.339]{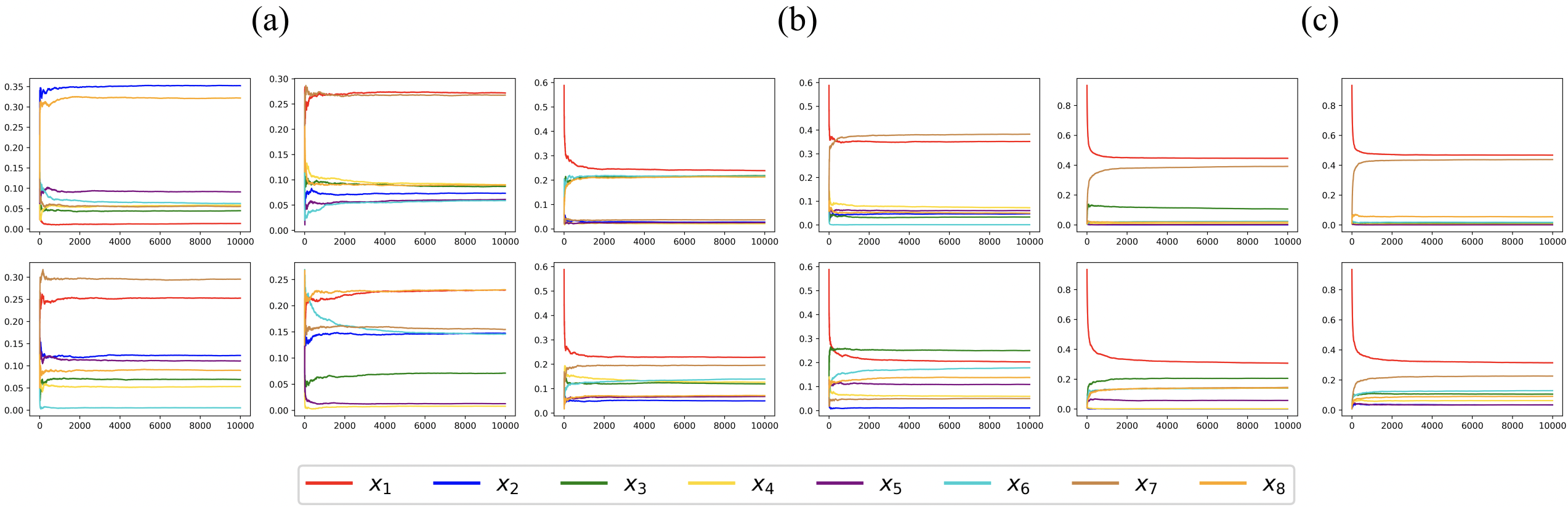}
\caption{(a) Four simulations on the cube with initial condition $(1,1,1,1,1,1,1,1)$. 
(b) Four simulations on the cube with initial condition $(10,1,1,1,1,1,1,1)$. 
(c) Four simulations on the cube with initial condition $(100,1,1,1,1,1,1,1)$.
The almost vertical components in the graphs in the first
iterations are due to large oscillations in the beginning of the process.}
\label{simulations}
\end{figure}

\medskip
Certainly, the hypothesis of Theorem \ref{thm-not-injective} is not global,
as the next lemma shows. Given a vertex $i\in[m]$, we let $E^i = \{I \in E: i \in I\}$
denote the set of hyperedges containing $i$. 

\begin{lemma}\label{lemma-leaf}
Assume that there are $I\in E$ and $i,j\in I$ such that $E^i=\{I\}$ and $|E^j|>1$. 
If $w\in\Lambda$ with $w_i>0$, then $w$ is unstable.
In particular, the limit set of $x(n)$ converges to $\partial\Delta^{m-1}$ almost surely.
\end{lemma}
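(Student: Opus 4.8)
The plan is to use the equilibrium condition at vertex $i$, where the hypothesis $E^i=\{I\}$ makes the gradient a single term, to pin down $w_I$ exactly, and then show that this value forces the gradient at $j$ to be strictly positive. First I would set $S=\{\ell:w_\ell>0\}$, the support of $w$, so that $w\in\Lambda_S$ and hence $\frac{\partial L}{\partial v_\ell}(w)=0$ for every $\ell\in S$. Since $w_i>0$ we have $i\in S$, and because $E^i=\{I\}$ the formula (\ref{partial-derivative}) reduces to a single summand at $i$, giving
$$
0=\frac{\partial L}{\partial v_i}(w)=-1+\frac1N\cdot\frac1{w_I},
$$
so that $w_I=\tfrac1N$ (consistent with $w\in\Delta$, as $c<\tfrac1N$).

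Next I would evaluate the gradient at $j$. Since $I\ni j$ and $w_I=\tfrac1N$, the contribution of $I$ equals $\tfrac1N\cdot N=1$ and cancels the constant $-1$, leaving
$$
\frac{\partial L}{\partial v_j}(w)=-1+\frac1N\sum_{I'\ni j}\frac1{w_{I'}}=\frac1N\sum_{\substack{I'\ni j\\ I'\neq I}}\frac1{w_{I'}}.
$$
As $|E^j|>1$ there is at least one hyperedge $I'\neq I$ with $j\in I'$, and $w_{I'}\geq c>0$ because $w\in\Delta$; hence $\frac{\partial L}{\partial v_j}(w)>0$. Were $w_j>0$, then $j\in S$ and the equilibrium condition would force this derivative to vanish, a contradiction. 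Thus $w_j=0$, and with $\frac{\partial L}{\partial v_j}(w)>0$ the point $w$ is unstable by definition (witness $k=j$), which proves the main assertion.

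For the final clause I would observe that the hypothesis forbids interior equilibria altogether: any $w\in\Lambda$ in ${\rm int}(\Delta^{m-1})$ has $w_i>0$, hence is unstable by the above, yet an interior equilibrium is non-unstable by definition. This contradiction gives $\Lambda\subset\partial\Delta^{m-1}$, and since $L\{x(n)\}\subset\Lambda$ almost surely by Proposition \ref{prop: limit set}, the limit set lies in $\partial\Delta^{m-1}$ almost surely. The argument is almost entirely a computation; the only delicate point is recognizing that $E^i=\{I\}$ is exactly what fixes $w_I=\tfrac1N$, which is what unbalances the gradient at $j$, together with checking that the extra hyperedge $I'\ni j$ has $w_{I'}>0$, guaranteed by $w\in\Delta$.
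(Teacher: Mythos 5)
Your proof of the main assertion is correct and coincides with the paper's: both pin down $w_I=\tfrac1N$ from $\frac{\partial L}{\partial v_i}(w)=0$ using $E^i=\{I\}$, observe that this makes $\frac{\partial L}{\partial v_j}(w)=\frac1N\sum_{I'\ni j,\,I'\neq I}\frac{1}{w_{I'}}>0$, and conclude $w_j=0$ from the equilibrium condition $w_j\frac{\partial L}{\partial v_j}(w)=0$. (Your extra checks --- that $w$ with support $S$ lies in $\Lambda_S$, and that $w_{I'}\geq c>0$ since $w\in\Delta$ --- are correct and implicit in the paper.)

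For the final clause you take a genuinely different, and in fact lighter, route. The paper concludes that every \emph{non-unstable} equilibrium satisfies $w_i=0$ and then invokes Lemma~\ref{Lem: non-convergence} (non-convergence to unstable equilibria, whose proof occupies the Appendix) to rule out the unstable ones from the limit set. You instead note that the main assertion forbids interior equilibria altogether --- any $w\in\Lambda$ with $w_i>0$ must have $w_j=0$, and any $w\in\Lambda$ with $w_i=0$ is already on the boundary --- so $\Lambda\subset\partial\Delta^{m-1}$ outright, and the conclusion follows from Proposition~\ref{prop: limit set} alone. Both arguments are valid; yours is more elementary since it bypasses the unstable-equilibrium machinery entirely, while the paper's version records the slightly finer information that the limit set avoids the (possibly boundary) unstable equilibria with $w_i>0$, i.e.\ it is confined to the face $\{v_i=0\}$ rather than merely to $\partial\Delta^{m-1}$.
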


\begin{proof}
Since $w_i>0$, we have $\frac{\partial L}{\partial v_i}(w)=0$, 
then 
$$
0=\frac{\partial L}{\partial v_i}(w)=-1+\frac{1}{N}\cdot\frac{1}{w_I}\ \ \Longrightarrow \ \ 
w_I=\frac{1}{N}\,\cdot
$$
Now,
$$ \frac{\partial L}{\partial v_j}(w)=-1+\frac{1}{N}\left(\frac{1}{w_I}+\sum_{\substack{J\ni j\atop{J\neq I}}}\frac{1}{w_J}\right)=\frac{1}{N}\sum_{J\ni j\atop{J\neq I}}\frac{1}{w_J}>0. $$
Since $w_j\frac{\partial L}{\partial v_j}(w)=0$, we have $w_j=0$, thus
proving that $w$ is unstable. Therefore, every non-unstable equilibrium $w\in\Lambda$
must satisfy $w_i=0$. Applying Lemma \ref{Lem: non-convergence}, the proof is complete.
\end{proof}

This shows that there is a large class of hypergraphs that require a finer analysis 
of equilibria in $\partial\Delta^{m-1}$.

\section{Acknowledgements}

This work started during the undergraduate research program
``Jornadas de Pesquisa para Graduação'', held at ICMC-USP in January 2023, 
which was partially supported by Centro de Ciências Matemáticas Aplicadas à Industria
(CeMEAI - CEPID) under FAPESP Grant \#2013/07375-0.
The authors are grateful to Ali Tahzibi, Gracyella Salcedo, Guilherme Silva,
Marcelo Tabarelli and Rafael Zorzetto for the discussions and support, and the anonymous referees
for the suggestions that greatly improved the quality of the text.
PA was supported by CAPES. MB was supported by CAPES and CNPq. 
YL was supported by CNPq/MCTI/FNDCT project 406750/2021-1; 
FUNCAP grant UNI-0210-00288.01.00/23; and Instituto Serrapilheira,
grant ``Jangada Din\^{a}mica: Impulsionando Sistemas Din\^{a}micos na Regi\~{a}o Nordeste''.

\appendix

\section{Non-convergence to unstable equilibria}\label{sec: non-convergence}

In this appendix we prove Lemma \ref{Lem: non-convergence},
with methods similar to \cite[Lemma 5.2]{BBCL-15}. We begin with a simple adaptation of 
the second Borel-Cantelli lemma, that holds for events that are not necessarily independent.

\begin{lemma}\label{lem:borelcantelli}
    Let $\{E_i\}_{i\geq 0}$ be a sequence of events, not necessarily independent. 
    If
    \begin{equation*}\label{}
        \sum_{i=n+1}^{\infty}\mathds{P}\left[E_i|E_n^\mathsf{c},\ldots,E_{i-1}^{\mathsf{c}} \right] =\infty \ \text{ for all } n \in \mathbb{N}
        ,
    \end{equation*}
    then $\mathds{P}\left[ \limsup\limits_{i \to \infty} E_{i} \right]  = 1$.
\end{lemma}
\begin{proof}
    Since 
\begin{align*}
    1-\mathds{P}\left[\limsup_{i\to\infty}E_i\right] &= \mathds{P}\left[\liminf_{i\to\infty}E_i^\mathsf{c}\right]
    =\mathds{P}\left[\bigcup_{n=1}^\infty \bigcap_{i=n}^\infty E_i^\mathsf{c}\right]
    =\lim_{n\to\infty}\mathds{P}\left[\bigcap_{i=n}^\infty E_i^\mathsf{c}\right],
\end{align*}
it is enough to prove that $\mathds{P}\left[\bigcap_{i=n}^\infty E_i^\mathsf{c}\right] = 0$ for every $n \in \mathbb{N}$.
Since
\begin{align*}
   & \mathds{P}\left[\bigcap_{i=n}^\infty E_i^\mathsf{c}\right] = \mathds{P}[E_n^\mathsf{c}]\prod_{i=n+1}^\infty \mathds{P}[E_i^\mathsf{c}|E_n^\mathsf{c},\ldots,E_{i-1}^\mathsf{c}]
    =\mathds{P}[E_n^\mathsf{c}]\prod_{i=n+1}^\infty\left(1-\mathds{P}[E_i|E_n^\mathsf{c},\ldots,E_{i-1}^\mathsf{c}]\right)\\
    &\leq \mathds{P}[E_n^\mathsf{c}]\prod_{i=n+1}^\infty e^{-\mathds{P}[E_i|E_n^\mathsf{c},\ldots,E_{i-1}^\mathsf{c}]}
    = \mathds{P}[E_n^\mathsf{c}]\cdot e^{-\sum_{i=n+1}^\infty \mathds{P}[E_i|E_n^\mathsf{c},\ldots,E_{i-1}^\mathsf{c}]}=0,
\end{align*}
since $\sum_{i=n+1}^{\infty}\mathds{P}[E_i|E_n^\mathsf{c},\ldots,E_{i-1}^\mathsf{c}]=\infty$.
\end{proof}

Recall that $E^i = \{I \in E: i \in I\}$ is the set of hyperedges containing $i$. 

\begin{lemma}\label{Lem: neighborhood N}
Let $v \in \Lambda$ with $v_1=0$ and $\tfrac{\partial L}{\partial v_i (v)} > 3\delta$.
Then there exists a neighborhood $\mathcal{N}$ of $v + K$ not containing any other equilibrium,
an element $u \in \mathcal{N}$ and $\epsilon_0 > 0$ such that
\begin{enumerate}[{\rm (1)}]
\item $\frac{\partial L}{\partial v_i} (u) > 3\delta+\frac{|E^1|\epsilon_0}{N}$, and
\item for all $w \in \mathcal{N}$ and $I \in E^1$ it holds
\[
\frac{1}{w_I}>\frac{1}{u_I}-\epsilon_0.
    \]
\end{enumerate}
\end{lemma}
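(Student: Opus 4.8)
The plan is to exploit the single structural fact that makes the statement almost automatic: because $K=\ker(I(H)\restriction_\Gamma)$, moving along the affine space $v+K$ does not change any of the edge-sums, and hence none of the quantities occurring in the conclusion. Concretely, if $u\in v+K$ then $u-v\in K$, so $I(H)(u-v)=0$, i.e. $u_I=v_I$ for every $I\in E$; consequently $\frac{1}{u_I}=\frac{1}{v_I}$ and $\frac{\partial L}{\partial v_1}(u)=-1+\frac1N\sum_{I\ni 1}\frac{1}{u_I}=\frac{\partial L}{\partial v_1}(v)$ for all $u\in v+K$. Since $v\in\Lambda\subset\Delta$ we have $v_I\ge c>0$ for every $I$, so each $\frac{1}{v_I}$ is finite and the maps $w\mapsto \frac{1}{w_I}$ are continuous near $v+K$. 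I would therefore simply take $u:=v$ (any point of $(v+K)\cap\Delta$ gives identical values and would serve equally well).

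With $u=v$ fixed, I would choose the two parameters in order. First pick $\epsilon_0>0$ so small that $\frac{|E^1|\epsilon_0}{N}<\frac{\partial L}{\partial v_1}(v)-3\delta$; since the right-hand side is positive by hypothesis, this is possible, and it is precisely conclusion (1), as $\frac{\partial L}{\partial v_1}(u)=\frac{\partial L}{\partial v_1}(v)$. With $\epsilon_0$ now fixed, I would let $\mathcal N$ be a tube $\{w:\operatorname{dist}(w,v+K)<r\}$ of small radius $r$. The key uniform estimate is that for $w$ in this tube one has $w_I=v_I+e_I$, where $e$ is the displacement of $w$ transverse to $v+K$; hence $|w_I-v_I|\le\sqrt m\,r$, so $w_I$ stays bounded below by $c/2$ and $\bigl|\frac{1}{w_I}-\frac{1}{v_I}\bigr|\le\frac{\sqrt m\,r}{(c/2)\,c}$ for every $I$. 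Choosing $r$ small enough makes this last quantity $<\epsilon_0$; recalling $u_I=v_I$, this is exactly conclusion (2).

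It remains to arrange that $\mathcal N$ contains no equilibrium outside $v+K$. By Lemma \ref{lemma.equil.face}, each nonempty $\Lambda_S$ is contained in a single translate $w_S+K$, and two such translates, being parallel copies of $K$, either coincide or are separated by the positive distance $\operatorname{dist}(w_S-v,K)$. As there are only finitely many faces $S$, the equilibria not lying in $v+K$ are confined to finitely many translates distinct from $v+K$, hence sit at some positive distance $d_0$ from $v+K$. Shrinking $r$ below $d_0$ (in addition to the bound from the previous step) then forces $\Lambda\cap\mathcal N\subset v+K$. I expect the only real subtlety to be that $v+K$ is unbounded, so one must be sure that all the estimates above are uniform over the whole affine space rather than merely near $v$; this is handled precisely by the constancy $w_I=v_I$ along $v+K$ noted at the outset, which reduces every bound to a function of the transverse displacement alone. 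The separation argument and the continuity estimate are then routine.
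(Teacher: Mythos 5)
Your proof is correct and follows essentially the same route as the paper's: choose $\epsilon_0$ from the gap $\frac{\partial L}{\partial v_1}(v)-3\delta>0$, use the constancy of the edge-sums $w_I$ along $v+K$ (since $K=\ker(I(H)\restriction_\Gamma)$) together with continuity of $w\mapsto 1/w_I$ to shrink the tube, and use the fact that $\Lambda$ lies in finitely many translates of $K$ to exclude other equilibria. If anything, your write-up is slightly more careful than the paper's: you make the choice $u=v$ explicit, and your transverse-displacement estimates are uniform along the unbounded affine space $v+K$, a point the paper's appeal to $\diam(\mathcal N)$ and uniform continuity on $\overline{\mathcal N}$ glosses over.
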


\begin{proof}
  Fix $\epsilon_0 \in \left(0, \left(\frac{\partial L}{\partial v_i}(v) -3\delta\right)\frac{N}{|E^1|}\right)$,
  then $\frac{\partial L}{\partial v_i}(v) > 3\delta + \frac{|E^1|\epsilon_0}{N}$.
  Since $\frac{\partial L}{\partial v_i}$ is continuous and equal to $\frac{\partial L}{\partial v_i} (v)$
  along $v + K$, there exists a neighborhood $\mathcal{N}$ of $v + K$ small enough such that
  it does not contain any other equilibrium and also satisfies condition (1).

  Given $I \in E^1$, the function $w \in \overline{\mathcal{N}} \mapsto \frac{1}{w_I}$ is uniformly continuous and therefore there exists $\overline{\delta}_I$ such that
$$
|w-\widetilde{w}|<\overline{\delta}_I \implies \left|\frac{1}{w_I}-\frac{1}{\widetilde{w}_I}\right|<\epsilon_0, 
\ \forall w,\widetilde{w} \in \overline{\mathcal{N}}.
$$
Thus, if we take $\diam(\mathcal{N})< \max\limits_{I \in E^1} \overline{\delta}_I$,
then condition (2) is also satisfied.
\end{proof}

\begin{proof}[Proof of Lemma \ref{Lem: non-convergence}]
Since $v$ is an unstable equilibrium, there exists $i \in [m]$ such that $v_i = 0$
and $\frac{\partial L}{\partial v_i}(v) > 0$. Without loss of generality, assume $i = 1$.
Since ${\rm Lim}\{x(n)\}$ is a connected subset of $\Lambda$ (Theorem~\ref{thm:limit set theorem}),
if $v \in {\rm Lim}\{x(n)\}$, then ${\rm Lim}\{x(n)\} \subset v + K$.
Firstly, we claim that
\begin{equation}\label{eq:balls}
\mathds{P}\left[\lim\limits_{n\to\infty}B_1(n)=\infty\right]=1.
 \end{equation}
Fix an edge $I \in E^1$ and let $Z_n$ be the event that the vertex 1 is the chosen from hyperedge
$I$ at step $n+1$. Because $1 \leq B_i(n) \leq N_0 + nN$ for all $i \in [m]$,
  \[
    \mathds{P}\left[Z_n | Z_k^{\mathsf{c}}, \ldots, Z_{n-1}^{\mathsf{c}} \right]\geq \dfrac{1}{|I|(N_0+nN)}
  \]
  for every $k<n$. Then (\ref{eq:balls}) follows by Lemma~\ref{lem:borelcantelli}.
  
Let $\delta > 0$ and $\mathcal{N}$ be as in Lemma~\ref{Lem: neighborhood N},
and fix $B$ large enough (to be specified later).
We define the event $\mathcal{Y}_n = \{x(k) \in \mathcal{N}, \forall k \geq n\} \cap \{B_1(n) \geq B \}$.
Observing that $\{{\rm Lim}\{x(n)\}\subset v+K\} \subset \bigcup_{m \geq n} \mathcal{Y}_{m}$
for every $n>0$, it  is enough to prove that there is $n_0 > 0$ such that 
$$
\mathds{P}\left[ \mathcal{Y}_{n} \right] = 0, \ \forall n > n_0.
$$	
Let $\mathcal{G}_n = \mathcal{F}_n \cap \mathcal{Y}_n$.
Define $c = \frac{\delta(1 + 2\delta)}{1 + \frac{3}{2}\delta} > 0$.
We claim that, if $B$ is large enough, then there exists $n_0 > 0$ such that
\begin{equation}\label{eq: log expectation}
\mathds{E}\left[\log x_1((1+\delta)n)|\mathcal{G}_n\right] \geq \log x_1(n) + \frac{1}{2} \log(1+c), \ \forall n > n_0.
\end{equation}
Once we have this, suppose there exists $n>n_0$ such that
$\mathds{P}[\mathcal{Y}_{n}] > 0$ and define $T_k = (1+\delta)^kn$ and $X_k = \log x_1(T_k)$. By (\ref{eq: log expectation}),
$$
\mathds{E}\left[X_{k+1} | \mathcal{G}_{n}\right] = \mathds{E}\left[\mathds{E}\left[X_{k+1} | \mathcal{G}_{T_k}\right]| \mathcal{G}_{n}\right] \geq \mathds{E}\left[X_k|\mathcal{G}_n\right] + \frac{1}{2}\log(1+c)
$$
and so by induction
$$
\mathds{E}\left[X_{k} | \mathcal{G}_{n}\right] \geq \mathds{E}[X_0|\mathcal{G}_n] + \frac{k}{2}\log(1+c)
\geq -\log\left(N_0 + nN\right) + \frac{k}{2}\log(1+c).
$$
Since the left hand side is nonpositive, we obtain a contradiction when $k$ is large enough. 

Now we prove (\ref{eq: log expectation}). Let $t \in \{n+1, \ldots, (1+\delta)n\}$. 
Restricted to $\mathcal{Y}_{n_0}$, for every $I \in E^1$ we have 
    \begin{align*}
   &\ \mathds{P}[1 \text{ is chosen in } I \text{ at step } t] = \dfrac{B_1(t-1)}{B_I(t-1)}
    \geq \dfrac{B_1(n)}{N_0+(t-1)N}\cdot\dfrac{1}{x_I(t-1)}\\
    &\geq \dfrac{B_1(n)}{N_0+(t-1)N}\left(\dfrac{1}{u_I}-\epsilon_0\right).
    \end{align*}
    Define a family of independent Bernoulli random variables $\{E_{t,I}\}$, $t=n+1,\ldots,(1+\delta)n$, $I \in E^1$ such that \[
    \mathds{P}[E_{t,I}=1] = \dfrac{B_1(n)}{N_0+(t-1)N}\left(\dfrac{1}{u_I}-\epsilon_0\right).
    \]
    Now couple $\{E_{t,I}\}$ to our model as follows: if $E_{t,I}=1$, then $1$ is chosen in $I$ at step $t$. Then
    \begin{align*}
       & \mathds{E}\left[\sum_{\substack {n+1\leq t \leq (1+\delta)n \\ I \in E^1}} E_{t,I}\right] = \sum_{I \in E^1}\left(\sum_{t=n+1}^{(1+\delta)n}\dfrac{B_1(n)}{N_0+(t-1)N}\left(\dfrac{1}{u_I}-\epsilon_0\right)\right)\\
        &=B_1(n)\left[\sum_{I\in E^1}\left(\dfrac{1}{u_I}-\epsilon_0\right)\right]
        \left[\sum_{t=n+1}^{(1+\delta)n}\dfrac{1}{N_0+(t-1)N}\right]
         \geq B_1(n) (1+3\delta)\log\left(1+\dfrac{\delta nN}{N_0+nN}\right).
    \end{align*}
    If $n_0$ is large enough (such that $\frac{n_0 N}{N_0+n_0 N} > \frac{1}{1+\frac{1}{2}\delta}$), we get \[
    \mathds{E}\left[\sum_{\substack {n+1\leq t \leq (1+\delta)n \\ I \in E^1}} E_{t,I}\right] \geq B_1(n) \dfrac{\delta (1+3\delta)}{1+\frac{3}{2}\delta}.
    \]
    By Chernoff bounds, if $\epsilon_1>0$, then there is $B_0$ large enough such that\[
    \mathds{P}\left[\sum_{\substack {n+1\leq t \leq (1+\delta)n \\ I \in E^1}} E_{t,I} > B_1(n)\dfrac{\delta (1+2\delta)}{1+\frac{3}{2}\delta} \right] > 1-\epsilon_1
    \]
    for every $B_1(n)>B_0$. Whenever the previous event holds, the coupling gives us
    \begin{equation*}
        B_1((1+\delta)n)-B_1(n) \geq \sum_{\substack {n+1\leq t \leq (1+\delta)n \\ I \in E^1}} E_{t,I} > B_1(n)\dfrac{\delta (1+2\delta)}{1+\frac{3}{2}\delta},
    \end{equation*}
    thus, $\mathds{P}[x_1((1+\delta)n)>x_1(n)(1+c)|\mathscr{G}_n]>1-\epsilon_1$. Now, because $x_1((1+\delta)n)>\frac{x_1(n)}{1+\delta}$, we get 
    \begin{equation*}
        \mathds{E}[\log x_1((1+\delta)n)|\mathscr{G}_n] > (1-\epsilon_1)\log(x_1(n)(1+c))+\epsilon_1\log\left(\dfrac{x_1(n)}{1+\delta}\right).
    \end{equation*}
    For large $B_0$, $\epsilon_1$ can be made small enough such that
    \begin{equation*}
    	\mathds{E}[\log x_1((1+\delta)n)|\mathscr{G}_n] > \log x_1(n)+\dfrac{1}{2}\log (1+c),
    \end{equation*}
	thus proving (\ref{eq: log expectation}), and hence Lemma \ref{Lem: non-convergence}.
  \end{proof}

\bibliographystyle{alpha}
\bibliography{bibliography}{}

\end{document}